\newtheorem{theorem}{Theorem}[section]
\newtheorem{definition}{Definition}[section] 
\newtheorem{corollary}{Corollary}[section] 
\newtheorem{lemma}{Lemma}[section]
\newtheorem{remark}{Remark}[section] 
\newcommand{\N}{\mathbb{N}}
\newcommand{\R}{\mathbb{R}}
\newcommand{\C}{\mathbb{C}}
\newcommand{\dist}{\textrm{dist}}
\newcommand{\grad}{\nabla}
\newcommand{\eps}{\varepsilon}
\begin{document}
\setlength{\parskip}{1mm}
\setlength{\oddsidemargin}{0.1in}
\setlength{\evensidemargin}{0.1in}
\lhead{}
\rhead{}
 
\begin{center}
{\bf \Large  The Imaging of Small Perturbations in an Anisotropic Media} \\
\vspace{0.2in}
Fioralba Cakoni,\footnote{Department of Mathematics, Rutgers University, Piscataway, NJ 08854-8019, USA. E-mail: iharris@math.tamu.edu}
Isaac Harris\footnote{Department of Mathematics, Texas A$\&$M  University, College Station, TX 77843-3368, USA. E-mail: iharris@math.tamu.edu}
and Shari Moskow\footnote{Department of Mathematics, Drexel University, Philadelphia, PA  19104-2875, USA. E-mail: moskow@math.drexel.edu}
\end{center}

\begin{abstract}
In this paper, we employ asymptotic analysis to determine information about small volume defects in a known anisotropic scattering medium from far field scattering data. The location of the defects is reconstructed via the MUSIC algorithm from  the range of the multi-static response matrix derived from the asymptotic expansion of the far field pattern in the presence of small defects. Since the same data determines the transmission eigenvalues corresponding to the perturbed media, we investigate how the presence of the defects changes the transmission eigenvalues and use this information to recover the strength of the small defects.  We provide convergence results on  transmission eigenvalues as the size of the defects tends to zero as well as derive the first correction term in the asymptotic expansion of the simple  transmission eigenvalues.  Numerical examples are presented to show the viability of our imaging method.
\end{abstract}
 
 {\bf Keywords:} Inhomogeneous media, anisotropic media, inverse scattering, MUSIC, transmission eigenvalues, asymptotic methods.

\section{Introduction}\label{intro}
The imaging of anisotropic media from scattering data is a challenging problem mainly due to the non-uniqueness issue \cite{nonuniq}. Yet, in many applications in medical imaging and non-destructive testing, the scattering media exhibit anisotropic properties in the interaction with probing waves. The so-called qualitative methods in inverse scattering \cite{p1} provide imaging techniques to obtain information on changes in material properties of a known anisotropic media. This work concerns the imaging of  small volume (possibly anisotropic) perturbations  of a known anisotropic inhomogeneous media in acoustic wave propagation (for the case of ${\mathbb R}^3$) or specially polarized electromagnetic wave propagation (for the case of ${\mathbb R}^2$). Combining asymptotic analysis with MUSIC and the related transmission eigenvalue problem we derive a range test for the location of  small perturbations and computable formulas that provide information about the strength (involving the contrast and geometrical features) of the small perturbation. There is a vast literature on the MUSIC algorithm for a variety of scattering problems  \cite{amari}, \cite{ffformula}, \cite{park} and we recall here its formulation for the anisotropic inhomogeneous media. The asymptotic analysis of the transmission eigenvalue problem for isotropic media is  studied in \cite{CM} and \cite{CMR}. One of the main contributions of this study is the asymptotic analysis of the transmission eigenvalue problem for anisotropic media with the first order correction term for the perturbation of the  eigenvalues. Note that the transmission eigenvalue problem is non-linear and non-selfadjoint, and the mathematical structure of this problem for anisotropic media is different from the isotropic case. In addition, we show  how to use the asymptotic expansion for the perturbation of transmission eigenvalues together with the MUSIC algorithm to image small volume perturbations of anisotropic media.

Let us now precisely formulate the  problem under consideration. To this end let $D \subset \R^d$ (for $d=2$ or 3) be a bounded domain with piecewise smooth boundary which denotes the support of the anisotropic media to be tested.  The real valued  symmetric matrix $A(x) \in {C}^1(D, \R^{d \times d})$ with  smooth entries and the smooth function $n\in  {C}^1(D)$ represent the constitutive parameters for the unperturbed (``healthy'') anisotropic  media. Without loss of generality we assume that outside the scatterer $D$ the background media has  refractive index scaled to one, i.e. $A(x)=I$ and $n(x)=1$  in $x \in \R^d \setminus \overline{D}$, where $I$ denotes the identity matrix. We define 
$$  { A}_b(x) =\left\{ \begin{array}{rl} I \;\; & \quad \, x \in \R^{d} \setminus \overline{D}  \\ A(x) & \quad \, x \in D  \end{array}\right.  \quad \text{ and } \quad {n}_b(x) =\left\{ \begin{array}{rl} 1 \;\;\; & \quad \, x \in \R^d \setminus \overline{D}  \\ n(x) & \quad \, x \in D.  \end{array}\right. $$ 
Now the scattering  of a time harmonic incident plane wave $\text{e}^{\text{i}kx \cdot \hat y}$ with incident direction $ \hat y \in \mathbb{S}$  by the unperturbed media  (i.e. without defects) is mathematically formulated as: find $u_b \in H^1_{loc}(\R^d)$ with $u_b =u^s_b + \text{e}^{\text{i}kx \cdot \hat y}$  such that
\begin{eqnarray}
\grad \cdot { A}_b (x) \grad u_b +k^2 {n}_b (x)  u_b=0 \,  &\textrm{ in }& \,  \R^d  \label{musicprod1}\\
\lim\limits_{r \rightarrow \infty} r^{\frac{d-1}{2}} \left( \frac{\partial u^s_b}{\partial r} -iku^s_b \right)=0 \label{2}, \label{musicprod2}
\end{eqnarray}
where  $\mathbb {S}$ denotes the unit circle/sphere, $r=|x|$,  and the Sommerfeld radiation condition (\ref{2}) is satisfied uniformly with respect to $\hat x=x/|x|$.
Here $u_b$ is the total field  in  the background (including the homogeneous part and the media of compact support $\overline{D}$)  and $u^s_b$ is the scattered field  due to the region $D$. 
Recall that the scattered radiating field $u_b^s(\cdot, \hat y )$, which depends on the incident direction $\hat y$, has the following asymptotic expansion \cite{coltonkress}
\begin{equation}\label{eq1}
u^s_b(x,\hat y)=\frac{\text{e}^{ik|x|}}{|x|^{\frac{d-1}{2}}} \left\{u_b^{\infty}(\hat{x}, \hat y ) + \mathcal{O} \left( \frac{1}{|x|}\right) \right\}\; \textrm{  as  } \;  |x| \to \infty
\end{equation}
where $\hat x:=x/|x|$, and $u_b^{\infty}(\hat x, \hat y) $, which depends on the incident direction $\hat y$ and observation direction $\hat{x}$, is the corresponding far field pattern.  Now we consider the small defective regions that are given by $z_m + \eps B_m$ where $B_m$ is a smooth deformation of a ball centered at the origin. Let $A_m$ and $n_m$ be constant constitutive parameters for the defective regions given by $z_m + \eps B_m$ and  assume that 
$$|z_i-z_j | \geq c_0 >0 \quad \text {for all } \, \, \, i \neq j  \quad \text {with } \, \, \,  i,j= 1, 2, \dots M \qquad \mbox{and}$$
 $$\dist (z_m \, , \partial D) \geq c_0 >0 \quad \text {for all } \, \, \, m= 1, 2, \dots M.$$
\noindent The union of the defective regions is denoted by $D_\eps = \bigcup\limits_{m=1}^{M} (z_m + \eps B_m)$ and we let 
$$  {A}_\eps (x) =\left\{ \begin{array}{rl} A_m & \quad  \, x \in (z_m + \eps B_m) \\  {A}(x) & \quad  \, x \in \R^d \setminus \overline{D}_\eps  \end{array}\right.   \quad  \mbox{and} \quad  {n}_\eps (x) =\left\{ \begin{array}{rl} n_m & \quad  \, x \in (z_m + \eps B_m) \\ {n}(x) & \quad \, x \in \R^d \setminus \overline{D}_\eps. \end{array}\right.  $$ 
The scattering problem for the media with the defective region $D_\eps$ now reads: find $u_\eps \in H^1_{loc}(\R^d)$ with $u_\eps =u_\eps^s + \text{e}^{\text{i}kx \cdot \hat y }$ such that
\begin{eqnarray}
\grad \cdot {A}_\eps (x) \grad u_\eps +k^2 {n}_\eps (x) u_\eps=0 \,  &\textrm{ in }& \,  \R^d  \label{musicprod3}\\
\lim\limits_{r \rightarrow \infty} r^{\frac{d-1}{2}} \left( \frac{\partial u_\eps^s}{\partial r} -ik u_\eps^s \right)=0. \label{musicprod4}
\end{eqnarray}
Similarly since $u^s_\eps$ is a radiating solution to the Helmholtz equation in $\R^d \setminus \overline{D}$,  it assumes a similar asymptotic expansion as (\ref{eq1}), and we denote by $u_\eps^{\infty}(\hat{x}, \hat y)$ its corresponding far field pattern. 
In this study we assume that the media is non-absorbing, and  $\inf_{x \in D} n(x)=n_0>0$, $n_m>0$, and 
\begin{eqnarray}
\inf_{x \in D} \inf_{|\xi|=1} \overline{\xi} \cdot A(x) \xi =A_{min}>0 \, \, &\textrm{ and }& \, \, \sup_{x \in D} \sup_{|\xi|=1} \overline{\xi} \cdot A(x) \xi =A_{max}< \infty  \label{ca1}
\end{eqnarray} 
For later use let us denote
\begin{eqnarray}
\min_{m=1 \dots M}\inf_{|\xi|=1} \overline{\xi} \cdot A_m \xi =a_{min}>0 \, \, &\textrm{ and }& \, \,  \max_{m=1 \dots M}\sup_{|\xi|=1} \overline{\xi} \cdot A_m \xi =a_{max}< \infty.  \label{ca2}
\end{eqnarray} 
The {\it{inverse problem}} we consider here is to determine the location $\{z_m\}_{m=1,M}$ of the perturbations and information about $A_m$ and $n_m$ from knowledge of $u_\eps^{\infty}(\hat{x}, \hat y)$  for several  $\hat x, \hat y \in {\mathbb S}$, provided that $A_b(x)$ and $n_b(x)$ are known.

In general, the support $D_\epsilon$ of the defects can be determined from the {\it far field operator} 
\begin{equation}\label{ffo}
(Fg)(\hat x)=\int_{\mathbb S}\left[u_\eps^{\infty}(\hat{x}, \hat y) -u_b^{\infty}(\hat{x}, \hat y)\right]g(\hat y)\,d\hat y\qquad \qquad \hat x\in{\mathbb S}
\end{equation}
via the factorization method \cite{CakoniHarris}. In addition,  it is well-known \cite{p1} that the far field operator $F$ determines the {\it real transmission eigenvalues}  which are defined below.
\begin{definition} Transmission eigenvalues are the values $k_\eps \in \C$ for which there is a non-trivial solution $(w,v) \in H^1(D) \times H^1(D)$ of 
\begin{eqnarray}
\grad \cdot A_\eps \grad w +k_\eps^2 n_\eps w=0  \quad \text{and } \quad  \Delta v + k_\eps^2 v=0 \, \, && \textrm{ in } \,  D \label{tedefect1}\\
  w=v \quad \text{and } \quad \frac{\partial w}{\partial \nu_{A_\eps}}=\frac{\partial v}{\partial \nu} \quad  & & \textrm{ on } \partial D. \label{tedefect2}
\end{eqnarray}
\end{definition} 
These transmission eigenvalues can be used to obtain information about $A_\epsilon$ and $n_\epsilon$ \cite{p1}. In this paper we will make use of the small volume feature of the defects and use asymptotic analysis to determine the locations $z_m$, $m=1 \dots M$  of the small inhomogeneities. Then, based on the perturbation formulas of the transmission eigenvalues, we determine geometric and physical information about these small defects via polarization tensors in the asymptotic formulas.
\section{Asymptotic Formulas and the MUSIC Algorithm}
To avoid technical difficulties with asymptotic expansions,  without loss of generality we assume that the anisotropic media is homogeneous, i.e. the matrix $A$ and the scalar $n$ are constant. We derive the multi-static response matrix by exploiting the fact that the each of the defective regions has small volume as in \cite{park}, which will be used to reconstruct the defective regions.  The multi-static response matrix can be seen as the discrete version of the far field operator $F$ given by (\ref{ffo}).  To this end we first  recall $\mathbb{G}( \cdot ,\cdot)$ the Green's function for the background layered media, i.e. the solution of
\begin{eqnarray*}
&&\grad \cdot {A}(x) \grad \mathbb{G}( \cdot ,z) +k^2 {n}(x)  \mathbb{G}( \cdot ,z)=-\delta (\, \cdot \, - z) \, \,  \textrm{ in } \,  \R^d \\
&&\lim\limits_{r \rightarrow \infty} r^{\frac{d-1}{2}} \left( \frac{\partial  \mathbb{G}( \cdot ,z)}{\partial r} -ik  \mathbb{G}( \cdot ,z) \right)=0.  
\end{eqnarray*}
Let $\mathbb{G}^{\infty}( \cdot \, , z) \in L^2(\mathbb{S})$ be it's far field pattern. Since $A(x)$ is a symmetric constant positive definite matrix for $x \in D$ and $n(x)$ is a positive constant for $x \in D$ we have by Theorem 5.1 in  \cite{CakoniHarris} that 
\begin{equation}\label{mix}
\mathbb{G}^{\infty}(\hat{x},z)={\gamma} u_b(z,-\hat{x}) \quad \text{ where } \quad \gamma = \frac{\text{e}^{i \pi/4}}{\sqrt{8 \pi k}} \,\,\text{ in } \, \, \R^2 \, \, \, \,  \text{ and }\quad    \gamma = \frac{1}{4 \pi } \, \, \text{ in } \, \,  \R^3.
\end{equation}
It can be shown (see \cite{ffformula}) by using Green's identities and the Sommerfeld radiation condition that $u_\eps$ satisfies the Lippmann-Schwinger representation formula given by 
\begin{eqnarray}
&&u_\eps (x,\hat y)=u_b(x,\hat y)+ \sum\limits_{m=1}^{M } k^2  \int\limits_{z_m + \eps B_m} \hspace{-0.2cm}(n_m-n ) \mathbb{G}( x , z) u_\eps(z,\hat y) \, dz \nonumber \\
&&\hspace{2in} + \int\limits_{z_m + \eps B_m} \hspace{-0.2cm} (A-A_m)\grad_z \mathbb{G}( x , z) \cdot \grad u_\eps(z,\hat y) \, dz. \label{intrep}
\end{eqnarray}
By linearity it is clear that the scattered field $u^s=u_\eps^s-u_b^s$ is due to the defective regions $D_\eps$ and  from \eqref{intrep} an asymptotic expansion for $u^{\infty}(\hat{x}, \hat y )$ can be obtained by combining the asymptotic results from \cite{ffformula} and \cite{anisoformula} together with  (\ref{mix}). Thus we obtain
\begin{eqnarray}
&&u^{\infty}( \hat{x},\hat y)= \gamma \eps^d k^2 \sum\limits_{m=1}^{M }   |B_m| \big(n_m-n  \big) u_b(z_m,- {\hat x})u_b(z_m,\hat y)  \nonumber \\
&& \hspace{1in} +  \gamma \eps^d \sum\limits_{m=1}^{M }   {\bf M}^{(m)} \grad  u_b(z_m,- {\hat x}) \cdot \grad u_b(z_m,\hat y) + {o} (\eps^{d}), \label{asymformula}
\end{eqnarray}
where the polarization tensor ${\bf M}^{(m)}$ is given by 
$$ {\bf M}^{(m)}_{i,j}=e_i \cdot (A_m-A)e_j +\int\limits_{\partial B_m} \left[ \nu(y) \cdot (A_m-A)e_j \right] \phi_i^+(y) \, ds_y$$ 
with $e_i$ being the $i$th basis vector in $\R^d$ and $\phi_i$ is the solution to 
 \begin{eqnarray*}
&&\grad \cdot {A}(x) \grad \phi_i=0 \, \,  \textrm{ in } \,  \R^d \setminus \overline{B}_m\\
&&\grad \cdot A_m \grad \phi_i=0 \, \,  \textrm{ in } \,  {B}_m\\
&&\phi_i^- - \phi_i^+=x_i   \, \,  \textrm{ on } \,  \partial B_m\\
&&\frac{\partial}{\partial \nu_{A_m}} \phi_i^- - \frac{\partial}{\partial \nu_{A}}\phi_i^+=\frac{\partial}{\partial \nu_{A}}x_i   \, \,  \textrm{ on } \,  \partial B_m\\
&& \phi_i(x)=\mathcal{O} \left(\frac{1}{|x|^{d-1}} \right)
\end{eqnarray*}

We now wish to use the leading term in \eqref{asymformula} to determine the location of the defective regions $z_m + \eps B_m$. To this end assume that there are $N$ incident and observation directions given by $\hat y_j,\hat x_i \in {\mathbb S}$ for $i,j= 1, 2, \dots N$. Now we define the multi-static response matrix ${\bf F} \in \C^{N \times N}$ given by 
\begin{eqnarray}
&&{\bf F}_{i,j}= \gamma \eps^d \sum\limits_{m=1}^{M }  k^2  |B_m| \big(n_m-n \big) u_b(z_m,- {\hat x}_i) u_b(z_m,{\hat y}_j) + \nonumber \\ 
&& \hspace{1.5in}+\gamma \eps^d \sum\limits_{m=1}^{M }   {\bf M}^{(m)} \grad  u_b(z_m,- {\hat x}_i) \cdot \grad u_b(z_m,{\hat y}_j) .  \label{msm}
\end{eqnarray}
\begin{remark}
The asymptotic expansion in \eqref{asymformula} as well as the multi-static response matrix ${\bf F}$ given by \eqref{msm} can be constructed for the more general case of an inhomogeneous background (i.e. where $A(x)$ is a matrix valued function and $n(x)$ is a scalar function in $D$). In this case $A$ and $n$ are replaced by $A(z_m)$ and $n(z_m)$.  A general mixed reciprocity for inhomogeneous media is proven in \cite{irene}.
\end{remark}
The range of multi-static response  matrix ${\bf F}$ determines the location of small inhomogeneities assuming that the data is collected at sufficiently many directions (see \cite{amari},\cite{harris-thesis}  and references therein). In particular,  we define the vectors ${\bf g}_z\in \C^N$ and $ {\bf g}_{z,b}  \in \C^N$  for any point $z \in \R^d$ and $b \neq 0 \in \C^d$ by 
$$ {\bf g}_z= \big(\, u_b(z,- {\hat x}_1) ,\,  \dots , u_b(z,- {\hat x}_N) \, \big)^{\top}$$
$${\bf g}_{z,b} = \big( b \cdot \grad u_b(z , -\hat{x}_1) , \, \dots \, , b \cdot \grad u_b(z , -\hat{x}_N) \big)^{\top},$$
and let
$${\bf g}_{z,(1,b)}= {\bf g}_z+{\bf g}_{z,b}.$$
Then the following range test can be proven  (see \cite{amari},\cite{harris-thesis}  and references therein), which essentially says that $z \in \{ z_m \, : \, m=1, \dots , M\}$ if and only if ${\bf g}_{z,(1,b)}$ is in the range of  ${\bf F}{\bf F}^*$.
\begin{theorem} \label{music7}
Let ${\bf w}_j$ be the $j$-th orthonormal eigenvector of ${\bf F}{\bf F}^*$ and let $r={\mbox{\em Rank}}\big( {\bf F}{\bf F}^* \big)$. Assume that the set $S=\{ {\hat x}_i \, : \, i \in \N \}$ is dense in $\mathbb{S}$  such that any analytic function that vanishes on $S$ also vanishes on $\mathbb{S}$. If $z \in D$ then there is a number $N_0 \in \N$ such that for all $N \geq N_0$ we have that 
$$\mathcal{I}(z)= \left[ \sum\limits_{j=r+1}^N \left|\big( {\bf g}_{z,(1,b)} , {\bf w}_j \big) \right|_{\ell^2}^2 \right]^{-1} < \infty \quad  \text{ if and only if } \quad z \in \{ z_m \, : \, m=1, \dots , M\}.$$
\end{theorem}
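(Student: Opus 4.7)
Since ${\bf F}{\bf F}^{*}$ is self-adjoint with $\operatorname{Range}({\bf F}{\bf F}^{*})=\operatorname{Range}({\bf F})$, the vectors $\{{\bf w}_{j}\}_{j=r+1}^{N}$ form an orthonormal basis of $\operatorname{Range}({\bf F})^{\perp}$, and the bracketed sum in $\mathcal{I}(z)$ is the squared norm of the orthogonal projection of ${\bf g}_{z,(1,b)}$ onto $\operatorname{Range}({\bf F})^{\perp}$. The claimed equivalence therefore reduces, for $N$ large enough, to showing that ${\bf g}_{z,(1,b)}\in\operatorname{Range}({\bf F})$ if and only if $z\in\{z_{1},\dots,z_{M}\}$.

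Next I would factor the multi-static matrix. Grouping the contributions of each defect in \eqref{msm} yields ${\bf F}=\gamma\eps^{d}\,{\bf X}\,{\bf D}\,{\bf Y}^{\top}$, where ${\bf X}\in\C^{N\times M(d+1)}$ has columns $\{{\bf g}_{z_{m}},{\bf g}_{z_{m},e_{1}},\ldots,{\bf g}_{z_{m},e_{d}}\}_{m=1}^{M}$, ${\bf Y}$ is the analogous matrix built from the incident directions $\hat{y}_{j}$ (in place of $-\hat{x}_{i}$), and ${\bf D}$ is block diagonal, its $m$-th block pairing the scalar $k^{2}|B_{m}|(n_{m}-n)$ with the $d\times d$ polarization tensor ${\bf M}^{(m)}$. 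Under the standard nonzero-contrast and invertible-polarization-tensor hypotheses, ${\bf D}$ is invertible, so $\operatorname{Range}({\bf F})=\operatorname{Range}({\bf X})$ as soon as ${\bf X}$ and ${\bf Y}$ both have full column rank $M(d+1)$. The \emph{if} direction is then immediate: with $b=\sum_{k}b_{k}e_{k}$ one has ${\bf g}_{z_{m},(1,b)}={\bf g}_{z_{m}}+\sum_{k}b_{k}{\bf g}_{z_{m},e_{k}}$, which is literally a column combination of ${\bf X}$.

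The heart of the argument, and the only place the hypothesis on $S$ enters, is the following $L^{2}(\mathbb{S})$ linear independence: for any pairwise distinct points $z_{0},z_{1},\dots,z_{M}\in D$, the $(M+1)(d+1)$ functions
\[
\hat{x}\mapsto u_{b}(z_{m},-\hat{x}),\qquad \hat{x}\mapsto \partial_{z_{k}}u_{b}(z_{m},-\hat{x}),\qquad m=0,\dots,M,\ k=1,\dots,d,
\]
are linearly independent on $\mathbb{S}$. Granted this, any would-be linear relation in $\C^{N}$ between ${\bf g}_{z,(1,b)}$ and the columns of ${\bf X}$ defines, by density of $\{\hat{x}_{i}\}$ together with the real-analyticity of each $\hat{x}\mapsto u_{b}(z,-\hat{x})$, the zero function on $\mathbb{S}$ once $N\geq N_{0}$; the independence then forces every coefficient to vanish, which is impossible when $z\notin\{z_{m}\}$ since the coefficient of ${\bf g}_{z}$ is normalised to $1$. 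The same statement, applied with $z_{0}$ suppressed, gives full column rank for ${\bf X}$ (and similarly for ${\bf Y}$).

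The main obstacle is thus the $L^{2}(\mathbb{S})$ independence above. I would prove it via the mixed-reciprocity identity \eqref{mix}, which converts any purported linear relation into the assertion that the far-field pattern of a finite combination of $\mathbb{G}(\cdot,z_{m})$ and $\partial_{z_{k}}\mathbb{G}(\cdot,z_{m})$ vanishes identically on $\mathbb{S}$. By Rellich's lemma and unique continuation, the corresponding combination of near-field Green's functions and their $z$-derivatives must vanish identically on $\R^{d}\setminus\{z_{0},\dots,z_{M}\}$. Localising near each distinct $z_{m}$ and comparing the monopole singularity of $\mathbb{G}(\cdot,z_{m})$ with the dipole singularities of $\partial_{z_{k}}\mathbb{G}(\cdot,z_{m})$ then forces the scalar and vector coefficients at that $z_{m}$ to vanish separately, which closes the argument.
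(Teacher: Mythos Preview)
The paper does not give its own proof of this theorem: it simply states the result and refers the reader to \cite{amari} and \cite{harris-thesis} for the argument. Your proposal is precisely the standard proof found in those references---factor ${\bf F}$ as $\gamma\eps^{d}{\bf X}{\bf D}{\bf Y}^{\top}$, reduce the range test to membership in the column span of ${\bf X}$, and establish the key $L^{2}(\mathbb{S})$ linear-independence of the functions $\hat{x}\mapsto u_{b}(z_{m},-\hat{x})$ and their gradients via mixed reciprocity, Rellich, unique continuation, and local analysis of the monopole/dipole singularities of the background Green's function. This is correct and is essentially the approach of \cite{amari} adapted to the anisotropic background; there is nothing to correct or contrast, since the paper itself defers to exactly this line of argument.
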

\subsection*{Numerical Validation of the MUSIC Algorithm}
We present here the numerical implementation of the MUSIC algorithm in the ${\mathbb R}^2$ case. To this end, we use simulated far-field data to reconstruct the defects in a square scatterer. The simulated data comes from solving the direct scattering problems \eqref{musicprod1}-\eqref{musicprod2} and \eqref{musicprod3}-\eqref{musicprod4} using a cubic finite element method with a perfectly matched layer. From this we will have the approximated scattered fields $u^{s}_\eps (\cdot \, , \hat x)$ and $u^{s}_b( \cdot \, , \hat x)$. The multi-static response matrix will be defined as 
$${\bf F}=\big[ u^{\infty}_\eps (\hat{x}_i,\hat{x}_j) - u^{\infty}_b (\hat{x}_i,\hat{x}_j) \big]_{i,j=1}^{N},$$
where the far-field patterns are given by the solutions of the direct problems using the finite element method. In the following we use $N$ different directions on the unit circle given by 
$$\hat{x}_i=\Big(\cos \left(  { 2 \pi (i-1 )}/{N}\right), \sin \left(  { 2 \pi (i-1 )}/{N}\right) \Big) \quad \text{ for } \quad i=1, \dots , N.$$ 
 In all examples we take $D=[-2,2]^2$ and we fix the wave number $k=1$. 
 
 We want to illustrate the performance of the MUSIC algorithm in reconstructing the defective regions $z_m + \eps B_m$ inside $D$. 
 We give examples with random noise added to the simulated data for $u^{\infty}_\eps (\hat{x}_i,\hat{x}_j)$. The random noise level is given by $\delta$ where the noise is added to the far-field data $u^{\infty}_\eps (\hat{x}_i,\hat{x}_j) +\delta E_{i,j}$ and the random matrix $E$ is such that $\| E \|_2 =1$. Since we have that $A$, $\, n$ and $D$ are known for non-destructive testing we can assume that the far-field pattern $u^{\infty}_b (\hat{x}_i,\hat{x}_j)$ is computed  from solving the scattering problem for the known background. Reconstruction examples are presented in Figure \ref{musicfig1}, Figure \ref{musicfig3}  and Figure \ref{musicfiganiso} where the configuration and reconstruction parameters are explained in the respective  labels.
 \begin{figure}[!ht]
\centering
\includegraphics[scale=0.3]{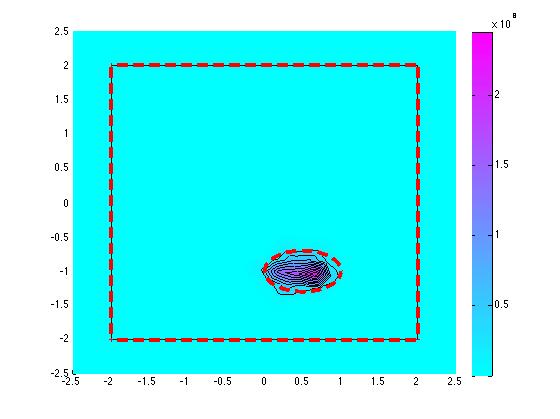}\includegraphics[scale=0.3]{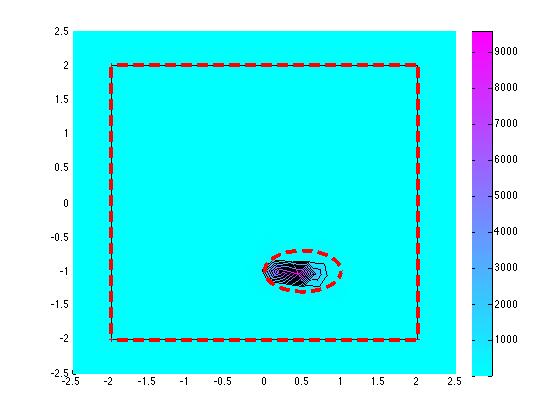}\\
\caption{Reconstruction the an ellipse $D_\epsilon$  centered at $(0.5,-1)$ with axes equal $0.5$ and $0.3$ inside $D:=[-2,2]\times[-2,2]$. The material parameters in $D$  are $A=0.5I$ and $n=5$, and in $D_\epsilon$ are $A_1=I$ and $n_1=1$, i.e. the defective region is a void.  The figure on the left shows the reconstruction without noise and on the right  with 10$\%$ noise. Here $N=64$.} 
\label{musicfig1}
\end{figure}
\begin{figure}[!ht]
\centering
\includegraphics[scale=0.3]{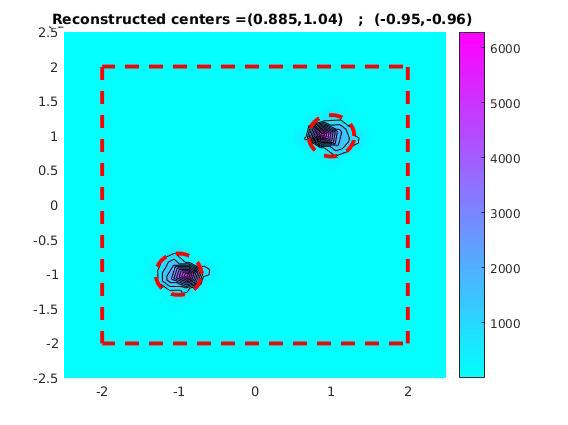}\includegraphics[scale=0.3]{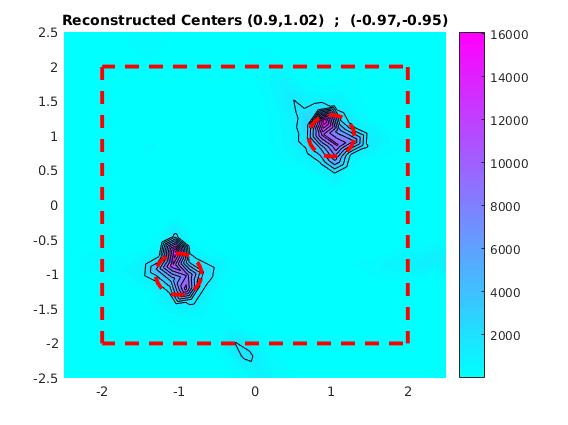}\\
\caption{Reconstruction  the defective region $D_\epsilon$ which is the union of discs  centered  centered at $(1,1)$ and $(-1,-1)$ with radius $\eps=0.3$ inside  $D:=[-2,2]\times[-2,2]$. The material parameters in $D$ are $A=0.5I$ and $n=5$, and in $D_\epsilon$  are $A_1=I$ and $n_1=1$.  The figure on the left shows the reconstruction without noise and on the right with 10$\%$ noise. Here $N=20$. Notice the estimated centers of the reconstructed discs.}
\label{musicfig3}
\end{figure}

\begin{figure}[!ht]
\centering
\includegraphics[scale=0.3]{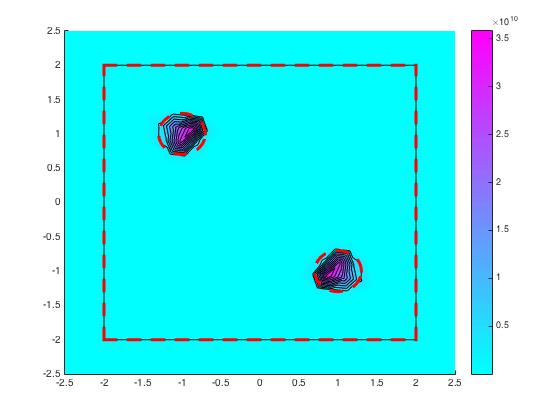}\includegraphics[scale=0.3]{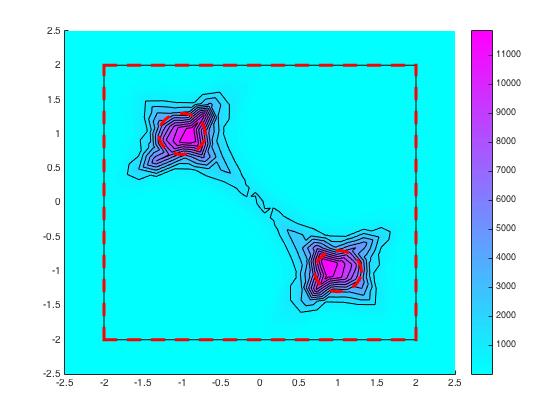}\\
\caption{Reconstruction the defective region $D_\epsilon$ which is the union of discs  centered  centered at $(-1,1)$ and $(1,-1)$ with radius $\eps=0.3$ inside  $D:=[-2,2]\times[-2,2]$. The material parameters in the anisotropic $D$ are $A=[  10 \, \, \,  1\,  ; \, 1 \, \,  \, 10 ]$ and  $n=5$,  and in $D_\epsilon$  are $A_1=I$ and $n_1=1$. The figure on the left shows the reconstruction without noise and on the right with $2\%$ noise. Here $N=32$.}
\label{musicfiganiso}
\end{figure}
\section{Convergence of the Transmission Eigenvalue Spectrum}
Recall that the multi-static data used for the MUSIC algorithm can also determine the transmission eigenvalues corresponding to the perturbed media. Having reconstructed the location of the small defects, we would like to obtain information about the strength of the perturbations $A_m$ and $n_m$ from the transmission eigenvalues.  To this end, we investigate how the small defects affect the transmission eigenvalues.  For the convergence analysis of the transmission eigenvalues  we assume more regularity on the coefficients of the unperturbed (without defects) media, i.e. they are  given by the symmetric matrix $A(x) \in {C}^2(D, \R^{d \times d})$ and $n(x) \in {C}^1(D)$. We start by showing that the eigenvalues $k_\eps$ for the perturbed media converge to the eigenvalues for the unperturbed media as $\eps \rightarrow 0$. Then we derive an asymptotic formula with correction term of the first order that can be used to obtain more information about the small defects.  To analyze \eqref{tedefect1}-\eqref{tedefect2} we define the variational space  $X(D):=\Big\{ (w,v): \; w,v \in H^1(D) \, | \,  w-v \in H^1_0(D) \Big\}$
equipped with the $H^1(D)\times H^1(D)$ inner product. It is clear that the variational form of \eqref{tedefect1}-\eqref{tedefect2} is given by 
\begin{equation*}
\int\limits_D A_\eps \grad w \cdot \grad \overline{\varphi}_1 -\grad v \cdot \grad \overline{\varphi}_2 -k_\eps^2(n_\eps w \overline{\varphi}_1- v \overline{\varphi}_2 )\, dx =0 \qquad \text{for all} \, \, (\varphi_1,\varphi_2) \in X(D). 
\end{equation*}
 For convenience we define the  bounded sesquilinear forms
$$\mathcal{A}_\eps \big( (w,v) ; (\varphi_1,\varphi_2) \big):=\int\limits_{D} A_\eps \grad w \cdot \grad \overline{\varphi}_1 + A_{min} w \overline{\varphi}_1 \, dx -\int\limits_{D}   \grad v \cdot \grad \overline{\varphi}_2 + v \overline{\varphi}_2 \, dx,$$
$$\mathcal{B}_\eps \big( (w,v) ; (\varphi_1,\varphi_2) \big):=\int\limits_{D}  n_\eps w \overline{\varphi}_1 \, - v \overline{\varphi}_2 \, dx,$$
$$\mathcal{C} \big( (w,v) ; (\varphi_1,\varphi_2) \big):=\int\limits_{D} A_{min} w \overline{\varphi}_1 \,  - v \overline{\varphi}_2 \, dx.$$
Therefore we have that \eqref{tedefect1}-\eqref{tedefect2} can be written as for all $(\varphi_1,\varphi_2) \in X(D)$
\begin{equation} 
\mathcal{A}_\eps \big( (w,v) ; (\varphi_1,\varphi_2) \big)- k_\eps^2 \mathcal{B}_\eps \big( (w,v) ; (\varphi_1,\varphi_2) \big) -\mathcal{C} \big( (w,v) ; (\varphi_1,\varphi_2) \big)=0. \label{tedefectvarform}
\end{equation}
Let us define by ${\bf A}_{\epsilon},\, {\bf B}_{\epsilon}$ and ${\bf C} :X(D)\to X(D)$ the bounded linear operators defined from $\mathcal{A}_\eps \big( \cdot \, ;  \cdot \big)$, $\mathcal{B}_\eps \big( \cdot \, ;  \cdot \big)$ and $\mathcal{C} \big( \cdot \, ;  \cdot \big)$ by  means of the Riesz representation theorem. The unperturbed media corresponds to  $\eps =0$, where $A_0:=A$, $n_0:=n$ and $k_0:=k$. It can be shown using $\mathbb{T}$-coercivity that  if $A_{min}$ and $a_{min} >1$ that ${\bf A}_{\epsilon}$ is invertible with the norm of the inverse independent of $\eps \geq 0$. To this end we consider the isomorphism $\mathbb{T}(w,v)=(w, -v+2w): X(D) \mapsto X(D)$ (it is easy to check that $\mathbb{T}=\mathbb{T}^{-1}$).
Then 
\begin{equation*}
\hspace*{-9cm} \left| \mathcal{A}_\eps \big( (w,v) ; \mathbb{T}(w,v) \big) \right| \geq 
\end{equation*}
$$\int\limits_{D} A_\eps \grad w \cdot \grad \overline{w} + A_{min} |w|^2 \, dx +\int\limits_{D}  | \grad v|^2 +|v|^2 \, dx-2\left| \,  \int\limits_{D}  \grad v_\eps \cdot \grad \overline{w}_\eps +  v_\eps \overline{w}_\eps \, dx \right|$$
and by Young's inequality we obtain that 
\begin{eqnarray*}
\left| \mathcal{A}_\eps \big( (w,v) ; \mathbb{T}(w,v) \big) \right| &\geq& \left(\alpha- \frac{1}{\delta} \right) ||w_\eps||^2_{H^1(D)} +(1-\delta) ||v_\eps||^2_{H^1(D)}.
\end{eqnarray*}
where we let $\alpha = \min \{ A_{min}, \, a_{min} \}$. Therefore we have proven that $\mathcal{A}_\eps \big( (w,v) ; \mathbb{T} (w,v) \big)$ is coercive provided that $\delta \in (1/\alpha , \, 1)$, implying ${\bf A}_{\epsilon}$ is invertible for $\epsilon\geq 0$.  Similar arguments hold for $A_{max}$ and $a_{max} <1$ where $A_{min}$ is replaced by $A_{max}$ in $\mathcal{A}_\eps \big( \cdot \, ;  \cdot \big)$ and $\mathcal{C} \big( \cdot \, ;  \cdot \big)$ with $\mathbb{T}(w,v)=(w-2v, -v)$. It is clear that in either case that ${\bf B}_{\epsilon}$ and ${\bf C}$ are compact operators by appealing to the compact embedding of $H^1(D)$ in $L^2(D)$. Now by \eqref{tedefectvarform} it is clear that $(w,v)$ are eigenfunctions corresponding to the eigenvalue $k_\eps$ provided that 
\begin{equation} 
\big( {\bf I} -k_\eps^2{\bf A}_{\eps}^{-1} {\bf B}_{\eps}-{\bf A}_{\eps}^{-1}{\bf C} \big)(w,v)=(0,0).
\end{equation}
Let us denote the eigenvalue parameter $\tau_\eps=k^2_\eps$ and define ${\bf T}_\eps:X(D)\to X(D)$
\begin{equation} 
{\bf T}_\eps (\tau_\eps):={\bf A}_{\eps}^{-1} {\bf B}_{\eps}+ \frac{1}{\tau_\eps} {\bf A}_{\eps}^{-1}{\bf C}.  \label{tdef}
\end{equation}
We can now rephrase \eqref{tedefectvarform} as a non-linear eigenvalue problem 
 \begin{equation} 
\tau_\eps {\bf T}_\eps (\tau_\eps)(w,v)=(w,v), \qquad \qquad \epsilon\geq 0. \label{nonlinear}
\end{equation}
Note that it is clear that ${\bf T}_\eps (\tau)$ depends analytically on $\tau$ in any subset of the complex plane that does not include the origin.
\subsection*{Convergence of the Spectrum}

In this section, we study the convergence of ${\bf T}_\eps (\tau)$ in the operator norm to the unperturbed operator ${\bf T}_0 (\tau)$ and then use results from \cite{sharinonlinear} to prove convergence for the transmission eigenvalues and eigenfunctions. To this end, notice that ${\bf B}_{\eps}$ and ${\bf C}$ are compact operators and that $\| {\bf A}_{\eps}^{-1} \|$ is uniformly bounded with respect to $\epsilon$, so we can conclude that ${\bf T}_\eps (\tau)$ is compact for all $\eps \geq 0$. Hence the convergence of ${\bf T}_\eps (\tau)$ would then imply the convergence of the transmission eigenvalues. We start by studying the convergence of the operator ${\bf B}_{\eps}$ to ${\bf B}_{0}$.
\begin{theorem} \label{bconverge}
${\bf B}_{\epsilon} \rightarrow {\bf B}_{0}$ in the operator norm. Moreover for some $\alpha \in (0,1)$ we have that $\left\| {\bf B}_{\eps}- {\bf B}_{0} \right\| \leq C \eps^{\alpha} \, \text{ in } \, \R^d$ for some C independent of $\eps$, $d=2,3$. 
\end{theorem}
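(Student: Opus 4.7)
The plan is to use the Riesz-representation definition of $\mathbf{B}_\eps$ and reduce the operator norm estimate to estimating the difference of sesquilinear forms:
$$\|\mathbf{B}_\eps - \mathbf{B}_0\| = \sup \frac{\bigl|\mathcal{B}_\eps\bigl((w,v);(\varphi_1,\varphi_2)\bigr) - \mathcal{B}_0\bigl((w,v);(\varphi_1,\varphi_2)\bigr)\bigr|}{\|(w,v)\|_{X(D)}\,\|(\varphi_1,\varphi_2)\|_{X(D)}},$$
where the supremum is over nonzero pairs. The $-v\overline{\varphi_2}$ contributions to $\mathcal{B}_\eps$ and $\mathcal{B}_0$ are identical, and $n_\eps - n$ is uniformly bounded and supported in $D_\eps$, so the difference collapses to
$$\bigl(\mathcal{B}_\eps - \mathcal{B}_0\bigr)\bigl((w,v);(\varphi_1,\varphi_2)\bigr) = \int_{D_\eps}(n_\eps - n)\, w\, \overline{\varphi_1}\, dx.$$

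Next I would apply a three-factor H\"older inequality, splitting $w$, $\varphi_1$, and the indicator $\mathbf{1}_{D_\eps}$ with exponents $p,p,p/(p-2)$ to obtain
$$\left|\int_{D_\eps}(n_\eps - n)\, w\, \overline{\varphi_1}\, dx\right| \le C\,|D_\eps|^{(p-2)/p}\, \|w\|_{L^p(D)}\,\|\varphi_1\|_{L^p(D)}.$$
Then use the Sobolev embedding $H^1(D)\hookrightarrow L^p(D)$ (uniformly in $\eps$, since the embedding is on the fixed domain $D$): in $d=3$ one takes the critical exponent $p=6$, while in $d=2$ one takes any fixed $p<\infty$. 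Finally, the disjoint union structure of $D_\eps=\bigcup_{m=1}^M(z_m+\eps B_m)$ gives $|D_\eps|\le CM\eps^d$.

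Combining these three ingredients yields the bound $\|\mathbf{B}_\eps - \mathbf{B}_0\|\le C\eps^{d(p-2)/p}\,\|(w,v)\|_{X(D)}\,\|(\varphi_1,\varphi_2)\|_{X(D)}$. In $d=3$ with $p=6$ the exponent is $2$; in $d=2$ with any $p>2$ the exponent is $2-4/p$, which can be made arbitrarily close to $2$. In both cases there is plenty of room to pick some $\alpha\in(0,1)$ making the stated estimate hold, which also delivers operator-norm convergence.

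There is no serious obstacle: the only care points are (i) correctly observing that the $v$-components cancel so the entire error is concentrated in $D_\eps$, and (ii) choosing a Sobolev exponent compatible with the dimension so that the volume factor translates into a genuine power of $\eps$. The constant $C$ absorbs $M$, the $L^\infty$ bounds on $n$ and $n_m$, the Sobolev embedding constant on $D$, and the H\"older exponent, all independent of $\eps$.
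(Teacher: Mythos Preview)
Your argument is correct and follows the same overall strategy as the paper: reduce to the integral $\int_{D_\eps}(n_\eps-n)w\overline{\varphi_1}\,dx$, then combine H\"older's inequality, the Sobolev embedding $H^1(D)\hookrightarrow L^p(D)$, and the volume bound $|D_\eps|\le C\eps^d$.

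The one difference is in how H\"older is applied. The paper first uses Cauchy--Schwarz to split off $\varphi_1$ in $L^2(D_\eps)$, obtaining $\|(\mathbf{B}_\eps-\mathbf{B}_0)(w,v)\|_{X(D)}\le\|(n_\eps-n)w\|_{L^2(D_\eps)}$, and then applies H\"older with exponents $p/2$ and $q=p/(p-2)$ to $|w|^2$ and $\chi_{D_\eps}$; this yields the exponent $d/(2q)=d(p-2)/(2p)$, which is strictly less than $1$ in both dimensions (taking $p<6$ in $d=3$). Your three-factor H\"older with exponents $(p,p,p/(p-2))$ exploits the Sobolev embedding on $\varphi_1$ as well, giving the exponent $d(p-2)/p$, exactly twice the paper's. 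In particular you obtain $\alpha=2$ in $d=3$ (with $p=6$) and any $\alpha<2$ in $d=2$, which is sharper than what the theorem asserts and than what the paper proves. Either version suffices for the stated result.
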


\begin{proof}
By definition we have that 
\begin{eqnarray*}
 \left| \big( {\bf B}_{\eps}(w,v) ; (\varphi_1,\varphi_2) \big) -\big( {\bf B}_{0}(w,v) ; (\varphi_1,\varphi_2) \big) \right| &=&  \left| \, \int\limits_{D_\eps}  (n_\eps- n) w \overline{\varphi}_1 \, dx \right|   \\
 										& \leq & || (n_\eps - n) w||_{L^2(D_\eps)} || (\varphi_1,\varphi_2) ||_{X(D)}.
 \end{eqnarray*}
 
 \noindent Therefore, we have that  $\left\| \big( {\bf B}_{\eps}-{\bf B}_{0}\big) (w,v) \right\|_{X(D)} \leq  || (n_\eps - n) w||_{L^2(D_\eps)}$. Now since $w \in H^1(D)$ we have from Sobolev's embedding in $\R^2$ or $\R^3$ that $w\in L^p(D)$ for some $p \geq 2$ (see e.g. \cite{brezis} for embedding results). We then conclude that $|w|^2 \in L^{p/2}(D)$. Now let $q$ be  defined by $\frac{1}{p/2}+\frac{1}{q}=1$ notice that $\frac{1}{q}=\frac{p-2}{p}$. Therefore by using the duality between $L^{p/2}(D)$ and $L^{q}(D)$ along with Sobolev's  embedding we have that 
\begin{eqnarray*}
\left\| \big( {\bf B}_{\eps}-{\bf B}_{0}\big) (w,v) \right\|^2_{X(D)} &\leq&  ||(n_\eps - n)||^2_{\infty} ||w||^2_{L^2(D_\eps)}\leq  C  ||\,  |w|^2 ||_{L^{p/2}(D)} || \chi_{D_\eps}||_{L^{q}(D)}\\
										  &=& C  |D_\eps| ^{1/q} || w ||^2_{L^{p}(D)} \leq  C \eps^{d/q}  || (w,v) ||^2_{X(D)}.
 \end{eqnarray*}
Hence, we have that  
$$||  {\bf B}_{\eps}-{\bf B}_{0}|| \leq C \eps^{d/2q} \quad \text{for } \, d=2,3$$
where the constant $C$ incorporates the norm of the contrasts but is independent of $\eps$. Now for the $\R^2$ for any choice of $p > 2$ we have that $\frac{1}{q} <1$ giving the result. For the case in $\R^3$ we can choose $p < 6$ giving that $\frac{1}{q}<2/3$ and therefore $d/2q <1$, which gives the result in $\R^3$.
 \end{proof}

We are now interested in the convergence of ${\bf A}_{\eps}^{-1} {\bf B}_{\eps}$ and ${\bf A}_{\eps}^{-1} {\bf C}$ as $\eps$ tends to zero.  Recall that ${\bf A}_{\eps}^{-1}$ exists as a bounded linear operator for all $\eps \geq 0$ where the norm of ${\bf A}_{\eps}^{-1}$ is uniformly bounded with respect to $\eps$. 
To study the convergence of ${\bf A}_{\eps}^{-1} {\bf B}_{\eps}$ and ${\bf A}_{\eps}^{-1} {\bf C}$  we first need some regularity results pertaining to ${\bf B}_0$ and ${\bf C}$. Notice that by the variational definition of ${\bf B}_{0}$ we have that for any $(f,g)\in X(D)$ if we denote ${\bf B}_0 (f,g)=(w,v)$ then 
\begin{equation} 
-\Delta w+w=n f \quad \text{ and } \quad \Delta v+v=g \quad \text{in } \, D. \label{pdeforb}
\end{equation}

\noindent Therefore by elliptic regularity we have that $w$ and $v$ are in $H^3_{loc}(D)$ provided that $n$ is continuously differentiable, and for any $\Omega \subset D$ 
\begin{equation*} 
\| w \|_{H^3(\Omega)} +\| v \|_{H^3(\Omega)} \leq C \left( \| f \|_{H^1(D)} +\| g \|_{H^1(D)} \right).
\end{equation*}

Next, as for the operator ${\bf C}$ we have that for any $(f,g)\in X(D)$ if we denote ${\bf C}(f,g)=(w,v)$ then 
\begin{equation*} 
-\Delta w+w=A_{min}f \quad \text{ and } \quad \Delta v+v=g \quad \text{in } \, D, 
\end{equation*}
and we have the elliptic regularity estimates 
for any $\Omega \subset D$ 
\begin{equation*} 
\| w \|_{H^3(\Omega)} +\| v \|_{H^3(\Omega)} \leq C \left( \| f \|_{H^1(D)} +\| g \|_{H^1(D)} \right).
\end{equation*}

\begin{theorem} \label{aconverge}
We have that 
$${\bf A}_{\eps}^{-1} {\bf B}_{\eps} \rightarrow {\bf A}_{0}^{-1} {\bf B}_{0} \quad \text{ and } \quad {\bf A}_{\eps}^{-1} {\bf C} \rightarrow {\bf A}_{0}^{-1} {\bf C}$$
in the operator norm as $\eps \rightarrow 0$. 
\end{theorem}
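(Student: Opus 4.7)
The plan is to handle both convergences simultaneously by writing a resolvent-type decomposition and reducing the estimate to an integral over the small defect region $D_\eps$. For the first statement, write
$$ {\bf A}_{\eps}^{-1}{\bf B}_{\eps}-{\bf A}_{0}^{-1}{\bf B}_{0} \;=\; {\bf A}_{\eps}^{-1}\bigl({\bf B}_{\eps}-{\bf B}_{0}\bigr) \;+\; \bigl({\bf A}_{\eps}^{-1}-{\bf A}_{0}^{-1}\bigr){\bf B}_{0}, $$
and use the standard resolvent identity $ {\bf A}_{\eps}^{-1}-{\bf A}_{0}^{-1}={\bf A}_{\eps}^{-1}({\bf A}_{0}-{\bf A}_{\eps}){\bf A}_{0}^{-1}$ to recast the second piece as ${\bf A}_{\eps}^{-1}({\bf A}_{0}-{\bf A}_{\eps}){\bf A}_{0}^{-1}{\bf B}_{0}$. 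The same split works for ${\bf A}_{\eps}^{-1}{\bf C}$ with ${\bf B}_{0}$ replaced by ${\bf C}$.

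The first summand is easy: since $\|{\bf A}_{\eps}^{-1}\|$ is uniformly bounded for $\eps\geq 0$ by the $\mathbb{T}$-coercivity already established, Theorem \ref{bconverge} yields $\|{\bf A}_{\eps}^{-1}({\bf B}_{\eps}-{\bf B}_{0})\| \leq C\eps^{\alpha}$.

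For the second summand, the key observation is that the variational identity
$$\bigl(({\bf A}_{\eps}-{\bf A}_{0})(w,v),(\varphi_{1},\varphi_{2})\bigr)_{X(D)}=\int_{D_{\eps}}(A_{\eps}-A)\grad w\cdot\grad\overline{\varphi}_{1}\,dx$$
involves only the gradient of $w$ on the small set $D_{\eps}$. If $(w,v)={\bf A}_{0}^{-1}{\bf B}_{0}(f,g)$ (or ${\bf A}_{0}^{-1}{\bf C}(f,g)$), then $w$ satisfies $-\grad\cdot(A\grad w)+A_{min}w=nf$ (respectively $A_{min}f$) in $D$, and interior elliptic regularity, using $A\in C^{2}$ and $n\in C^{1}$, lifts $w$ to $H^{2}$ on any compact subset of $D$. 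Since the defect centers $z_{m}$ are bounded away from $\partial D$ by the hypothesis $\dist(z_{m},\partial D)\geq c_{0}>0$, we have $\grad w \in H^{1}(\Omega)$ for a fixed neighborhood $\Omega$ of the defects, and Sobolev embedding gives $\grad w\in L^{p}(\Omega)$ for some $p>2$ (in both $d=2$ and $d=3$), with norm controlled by $\|(f,g)\|_{X(D)}$. Now apply H\"older's inequality exactly as in the proof of Theorem \ref{bconverge}:
$$\|\grad w\|_{L^{2}(D_{\eps})} \leq \|\grad w\|_{L^{p}(\Omega)} |D_{\eps}|^{(p-2)/(2p)} \leq C\eps^{d(p-2)/(2p)} \|(f,g)\|_{X(D)},$$
and bound the defect contrast $A_{\eps}-A$ by a constant. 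Combining with the uniform bounds on $\|{\bf A}_{\eps}^{-1}\|$ and $\|{\bf A}_{0}^{-1}\|$ yields $\|({\bf A}_{\eps}^{-1}-{\bf A}_{0}^{-1}){\bf B}_{0}\|\leq C\eps^{\beta}$ for some $\beta>0$, and similarly for ${\bf C}$.

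The main technical obstacle is verifying that the interior $H^{2}$ regularity of $w$ really does pass through for the matrix-coefficient equation $-\grad\cdot(A\grad w)+A_{min}w=nf$ with only $H^{1}$ right-hand side $nf$; this is where the $C^{2}$ assumption on $A$ (made precisely for this section) is used, together with standard interior estimates \`a la Gilbarg--Trudinger. A minor bookkeeping point is to verify that the same argument applies to ${\bf C}$, which is immediate since the PDEs and regularity estimates have identical form with $n$ replaced by the constant $A_{min}$.
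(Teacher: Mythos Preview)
Your proof is correct and follows essentially the same architecture as the paper: the identical decomposition
\[
{\bf A}_{\eps}^{-1}{\bf B}_{\eps}-{\bf A}_{0}^{-1}{\bf B}_{0}={\bf A}_{\eps}^{-1}({\bf B}_{\eps}-{\bf B}_{0})+({\bf A}_{\eps}^{-1}-{\bf A}_{0}^{-1}){\bf B}_{0},
\]
the same reduction of the second piece to the integral $\int_{D_\eps}(A_\eps-A)\grad w\cdot\grad\overline{\varphi}_1\,dx$ via the variational forms, and interior regularity of $w={\bf A}_0^{-1}{\bf B}_0(f,g)$ to make $\grad w$ small on $D_\eps$.

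The one notable difference is in how much regularity you extract. You observe (correctly) that $w$ solves $-\grad\cdot(A\grad w)+A_{min}w=nf$ with $nf\in H^1$, claim only $w\in H^2_{loc}$, and then use $\grad w\in H^1(\Omega)\hookrightarrow L^p(\Omega)$ together with H\"older to obtain a rate $\eps^{\beta}$ for some $\beta>0$. The paper instead runs a two-step argument---first $(p,q)={\bf B}_0(f,g)\in H^3_{loc}$ via the Laplacian Riesz representation, then $(w,v)={\bf A}_0^{-1}(p,q)\in H^3_{loc}$ by elliptic regularity for the $A$-equation---and embeds $H^3(\Omega')\hookrightarrow C^1(\Omega')$ to get $\grad w\in L^{\infty}$ near the defects. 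This yields the sharp rate $\|({\bf A}_\eps^{-1}-{\bf A}_0^{-1}){\bf B}_0\|=\mathcal{O}(\eps^{d/2})$, which is recorded in Corollary~\ref{convcorollary} and is what the subsequent asymptotic analysis actually needs. Your argument proves the theorem as stated, but if you want the $\eps^{d/2}$ rate you should note that with $nf\in H^1$ and $A\in C^2$ you in fact get $w\in H^3_{loc}$ (one more iteration of the interior estimate), and then use the $C^1$ embedding rather than the $L^p$ one.
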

\begin{proof}
Consider the pair $(w_\eps , v_\eps)$ and $(w,v)$ in $X(D)$ defined by  $(w_\eps , v_\eps) = {\bf A}_{\eps}^{-1}(f,g)$ and  $(w , v) = {\bf A}_{0}^{-1}(f,g)$  for any $(f,g)\in X(D)$. By definition we have that 
\begin{eqnarray*}
\mathcal{A}_\eps  \big( (w-w_\eps ,v-v_\eps) ; (\varphi_1,\varphi_2) \big)= \int\limits_{D_\eps} (A_\eps-A)\grad w \cdot \grad \overline{\varphi}_1 \, dx,
 \end{eqnarray*}
whence using the $\mathbb{T}$-coercivity  we conclude that 
$$\left\| \big({\bf A}_{\eps}^{-1}-{\bf A}_{0}^{-1} \big) (f,g) \right\|_{X(D)} \leq C || (A_\eps-A) \grad w ||_{L^2(D_\eps)}.$$
Next we  have that ${\bf A}_{0}^{-1}{\bf B}_0(f,g)=(w,v)$ due to the variational form of ${\bf A}_{0}$ satisfies
$$ -\grad \cdot A(x) \grad w+A_{min} w=-\Delta p+p \quad \text{ and } \quad \Delta v+v=-\Delta q+q \quad \text{in } \, D.$$
Recalling ${\bf B}_0 (f,g)=(p,q)$ we have $\| p \|_{H^3(\Omega)} +\| q \|_{H^3(\Omega)} \leq C \left( \| f \|_{H^1(D)} +\| g \|_{H^1(D)} \right)$ where $\Omega \subset D$ and  by elliptic regularity given any $\Omega' \subset \Omega  \subset D$ we have that 
$$\| w \|_{H^3(\Omega' )} +\| v \|_{H^3(\Omega'  )} \leq C \left( \| p \|_{H^3(\Omega)} +\| q \|_{H^3(\Omega)} \right) \leq C || (f,g) ||_{X(D)}.$$
Fixing $\Omega'$ and $\Omega$ such that $D_\eps \subset \Omega' \subset \Omega \subset D$ for all $\eps$ sufficiently small and using that $H^3(\Omega') \subset C^1(\Omega')$ we have the following estimates 
 \begin{eqnarray*}
\left\| \big({\bf A}_{\eps}^{-1}-{\bf A}_{0}^{-1} \big) {\bf B}_0(f,g) \right\|_{X(D)}  & \leq& C  || w ||_{C^1(\Omega')} || \chi_{D_\eps} ||_{L^2(D)}\\
												& \leq & C \eps^{d/2} || w ||_{C^1(\Omega')}.
 \end{eqnarray*}

Now appealing to the continuity of the embedding of $H^3(\Omega')$ into $C^1(\Omega')$ and the regularity estimate we have that 
$$\left\| \big({\bf A}_{\eps}^{-1}-{\bf A}_{0}^{-1} \big) {\bf B}_0 (f,g) \right\|_{X(D)} \leq C  \eps^{d/2} || (f,g) ||_{X(D)}.$$
Using that 
$$ \left\| {\bf A}_{\eps}^{-1}{\bf B}_\eps-{\bf A}_{0}^{-1}{\bf B}_0 \right\| \leq   \left\| {\bf A}_{\eps}^{-1} \big( {\bf B}_{\eps}-{\bf B}_{0}\big)  \right\|+ \left\| \big({\bf A}_{\eps}^{-1}-{\bf A}_{0}^{-1} \big) {\bf B}_0 \right\|$$
along with the uniform boundedness of $|| {\bf A}_{\eps}^{-1} ||$ and the norm convergence of ${\bf B}_\eps$ to ${\bf B}_0$ implies that ${\bf A}_{\eps}^{-1} {\bf B}_{\eps} \rightarrow {\bf A}_{0}^{-1} {\bf B}_{0} $ in norm. The same arguments work for showing that ${\bf A}_{\eps}^{-1} {\bf C} \rightarrow {\bf A}_{0}^{-1} {\bf C}$ in norm, which ends the proof.
\end{proof}
 
\begin{corollary}\label{convcorollary}
Let the operators ${\bf A}_{\eps}$, ${\bf A}_{0}$, ${\bf B}_{\eps}$, ${\bf B}_{0}$ and ${\bf C}$ be defined by the variational forms given above. Then we have that for $d=2,3$ 
\begin{equation*} 
\left\| \big({\bf A}_{\eps}^{-1}-{\bf A}_{0}^{-1} \big) {\bf B}_0 \right\|=\mathcal{O}(\eps^{d/2}), \qquad  \left\|\big({\bf A}_{\eps}^{-1}-{\bf A}_{0}^{-1} \big) {\bf C} \right\|=\mathcal{O}(\eps^{d/2}),
\end{equation*}
and $\left\|{\bf A}_{\eps}^{-1}\big({\bf B}_{\eps}-{\bf B}_{0} \big)  \right\|=\mathcal{O}(\eps^{\alpha})$ for some $\alpha \in (0,1)$ .
\end{corollary}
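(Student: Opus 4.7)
The plan is to recognize that all three estimates have essentially been derived along the way in the proofs of Theorems \ref{bconverge} and \ref{aconverge}; the corollary is just a distillation of the explicit rates that those arguments produced. So my strategy is to revisit each of those proofs and extract the quantitative bound. For the first estimate $\left\|({\bf A}_{\eps}^{-1}-{\bf A}_{0}^{-1}){\bf B}_0 \right\|=\mathcal{O}(\eps^{d/2})$, I would reproduce the chain inside the proof of Theorem \ref{aconverge}: set $(w,v)={\bf A}_0^{-1}{\bf B}_0(f,g)$, apply $\mathbb{T}$-coercivity to obtain $\left\|({\bf A}_\eps^{-1}-{\bf A}_0^{-1}){\bf B}_0(f,g)\right\|_{X(D)} \leq C \|(A_\eps-A)\grad w\|_{L^2(D_\eps)}$, and then use the interior $H^3$ regularity of the composition ${\bf A}_0^{-1}{\bf B}_0$ (which follows from $n \in C^1(D)$ as recorded in \eqref{pdeforb}) to control $\|w\|_{C^1(\Omega')}$ uniformly on a fixed open set $\Omega'$ that contains $D_\eps$ for all small $\eps$. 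The volume bound $\|\chi_{D_\eps}\|_{L^2(D)} = |D_\eps|^{1/2} = \mathcal{O}(\eps^{d/2})$ then delivers the claimed rate.

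For the second estimate $\left\|({\bf A}_{\eps}^{-1}-{\bf A}_{0}^{-1}){\bf C}\right\|=\mathcal{O}(\eps^{d/2})$, the argument is structurally identical, with the only modification being that $(w,v) = {\bf A}_0^{-1}{\bf C}(f,g)$ now satisfies the Helmholtz-type system with the smooth constant coefficient $A_{\min}$ in place of $n$, as written in the paragraph preceding Theorem \ref{aconverge}. The same interior $H^3$ regularity is available there, so the $C^1$ bound on $w$ on $\Omega'$ and the same $|D_\eps|^{1/2}$ factor give $\mathcal{O}(\eps^{d/2})$. For the third estimate, I would simply combine the norm convergence rate $\|{\bf B}_\eps - {\bf B}_0\| \leq C\eps^\alpha$ furnished by Theorem \ref{bconverge} with the $\eps$-independent bound on $\|{\bf A}_\eps^{-1}\|$ coming from $\mathbb{T}$-coercivity (established before the statement of Theorem \ref{bconverge}, under the hypothesis $A_{\min}, a_{\min} > 1$ or the dual hypothesis $A_{\max}, a_{\max} < 1$).

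The only genuine subtlety is bookkeeping: making sure the nested domains $\Omega' \Subset \Omega \Subset D$ used for the interior elliptic estimates can be chosen once and for all, independently of $\eps$, so that the regularity constants do not blow up. Since $\mathrm{dist}(z_m,\partial D) \geq c_0 > 0$ for every $m$ and the defective regions $z_m + \eps B_m$ shrink to the points $z_m$, one can fix $\Omega'$ and $\Omega$ to be neighborhoods of the finite set $\{z_1,\dots,z_M\}$ staying at positive distance from $\partial D$, and then $D_\eps \subset \Omega' \Subset \Omega \Subset D$ for all sufficiently small $\eps$. With that choice the embedding $H^3(\Omega') \hookrightarrow C^1(\Omega')$ yields the $\eps$-uniform pointwise control needed above. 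I do not expect any analytic obstacle beyond what has already been handled in Theorems \ref{bconverge} and \ref{aconverge}; the corollary really is a packaging statement.
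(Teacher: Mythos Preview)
Your proposal is correct and matches the paper's approach exactly: the corollary is stated in the paper without a separate proof, introduced simply by ``Combining the above results we have,'' and the three rates are precisely the ones extracted from the proofs of Theorems~\ref{bconverge} and~\ref{aconverge} in the manner you describe. Your discussion of fixing $\Omega' \Subset \Omega \Subset D$ independently of $\eps$ is also the same bookkeeping step carried out inside the proof of Theorem~\ref{aconverge}.
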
 
Combining the above results we have:
\begin{theorem} \label{tconverge}
Let the operator ${\bf T}_{\eps}(\tau)$ be as defined in \eqref{tdef} and $\tau \in  U$ with $U$ being any bounded subset of $\C$ with zero not a limit point of $U$. Then we have that 
\begin{equation*} 
\left\| {\bf T}_{\eps}(\tau)-{\bf T}_{0}(\tau) \right\|\longrightarrow 0  \quad  \text{ as } \quad  \eps \rightarrow 0.
\end{equation*} 
Moreover if $n_\eps=n$ for all $\eps\geq 0$ then we have that 
$$ \left\| {\bf T}_{\eps}(\tau)-{\bf T}_{0}(\tau) \right\| =\mathcal{O}(\eps^{d/2}). $$
\end{theorem}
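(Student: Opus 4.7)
The plan is to prove this as a direct consequence of Theorem \ref{aconverge} and Corollary \ref{convcorollary}, using the triangle inequality applied to the splitting
\[
{\bf T}_\eps(\tau) - {\bf T}_0(\tau) = \big({\bf A}_{\eps}^{-1} {\bf B}_{\eps} - {\bf A}_{0}^{-1} {\bf B}_{0}\big) + \frac{1}{\tau}\big({\bf A}_{\eps}^{-1} {\bf C} - {\bf A}_{0}^{-1}{\bf C}\big).
\]
Taking operator norms and using the triangle inequality gives
\[
\left\| {\bf T}_{\eps}(\tau)-{\bf T}_{0}(\tau) \right\| \leq \left\|{\bf A}_{\eps}^{-1} {\bf B}_{\eps} - {\bf A}_{0}^{-1} {\bf B}_{0} \right\| + \frac{1}{|\tau|}\left\|{\bf A}_{\eps}^{-1} {\bf C} - {\bf A}_{0}^{-1}{\bf C} \right\|.
\]
The hypothesis that $U$ is bounded with zero not a limit point of $U$ ensures that $\sup_{\tau\in U} 1/|\tau| < \infty$, so the $\tau$-dependent factor is harmless.

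For the first statement, I would simply invoke Theorem \ref{aconverge}: both $\|{\bf A}_{\eps}^{-1} {\bf B}_{\eps} - {\bf A}_{0}^{-1} {\bf B}_{0}\|$ and $\|{\bf A}_{\eps}^{-1} {\bf C} - {\bf A}_{0}^{-1}{\bf C}\|$ tend to zero as $\eps \rightarrow 0$, and the above estimate is uniform in $\tau \in U$ because $1/|\tau|$ is uniformly bounded on $U$. This yields $\|{\bf T}_\eps(\tau) - {\bf T}_0(\tau)\| \to 0$.

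For the refined rate in the second statement, I would exploit that $n_\eps = n$ forces ${\bf B}_\eps = {\bf B}_0$ identically. Consequently the first summand collapses to ${\bf A}_{\eps}^{-1} {\bf B}_{\eps} - {\bf A}_{0}^{-1} {\bf B}_{0} = ({\bf A}_{\eps}^{-1} - {\bf A}_{0}^{-1}){\bf B}_{0}$, which by Corollary \ref{convcorollary} is $\mathcal{O}(\eps^{d/2})$. The second summand equals $({\bf A}_{\eps}^{-1} - {\bf A}_{0}^{-1}){\bf C}$, which is also $\mathcal{O}(\eps^{d/2})$ by Corollary \ref{convcorollary}. Adding these and absorbing the bounded factor $1/|\tau|$ into the constant yields $\|{\bf T}_\eps(\tau) - {\bf T}_0(\tau)\| = \mathcal{O}(\eps^{d/2})$.

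I do not anticipate a real obstacle here: the proof is essentially bookkeeping, and the only subtlety is recognizing that when $n_\eps \neq n$ the slower rate $\mathcal{O}(\eps^\alpha)$ from the ${\bf B}_\eps - {\bf B}_0$ piece dominates (and is not even asserted as a rate in the statement), whereas when $n_\eps = n$ that piece disappears and the uniform $\mathcal{O}(\eps^{d/2})$ rate from the $\mathbb{T}$-coercivity estimate on ${\bf A}_\eps^{-1} - {\bf A}_0^{-1}$ controls everything. One should also briefly note that the implicit constant in the final bound depends on $U$ only through $\sup_{\tau \in U} 1/|\tau|$, so the estimate is indeed uniform on $U$.
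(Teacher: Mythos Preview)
Your proposal is correct and matches the paper's approach exactly: the paper presents Theorem~\ref{tconverge} with the phrase ``Combining the above results we have'' and no further argument, so the intended proof is precisely the triangle-inequality splitting and invocation of Theorem~\ref{aconverge} and Corollary~\ref{convcorollary} that you spell out. If anything, your write-up is more explicit than the paper's, particularly in noting that $n_\eps=n$ forces ${\bf B}_\eps={\bf B}_0$ and that the boundedness of $1/|\tau|$ on $U$ makes the estimate uniform.
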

Having proven the convergence of the operator ${\bf T}_{\eps}(\tau)$ we are ready to study the convergence of the real transmission eigenvalues using the abstract result  from \cite{sharinonlinear}.
\begin{lemma}\label{sharilemma}
Let $\tau$ be a non-linear eigenvalue of ${\bf T}_0$ and assume that ${\bf T}_0(\tau)$ and ${\bf T}_\eps (\tau)$ are both meromorphic in some region $U$ of $\C$ containing $\tau$. Also assume that ${\bf T}_\eps (\tau) \rightarrow {\bf T}_0(\tau)$ in the operator norm. Then for any ball around $\tau$ there exists a $\eps_0 > 0$ such that ${\bf T}_\eps$ has a non-linear eigenvalue in the ball for all $\eps < \eps_0$. Conversely if $\tau_\eps$ is a sequence of non-linear eigenvalues of ${\bf T}_\eps$ that converges as $\eps \rightarrow 0$, then the limit $\tau$ is a non-linear eigenvalue of ${\bf T}_0$. 
\end{lemma}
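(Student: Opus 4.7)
The plan is to view the non-linear eigenvalue problem as the study of characteristic values of the operator-valued function $F_\eps(z) := \mathbf{I} - z\,{\bf T}_\eps(z)$, which is meromorphic in $U$ (with poles only at $z=0$, excluded from $U$ by hypothesis). Since ${\bf T}_\eps(z)$ is compact for each $z \in U$ and depends analytically on $z$ in $U$, each $F_\eps(z)$ is Fredholm of index zero and analytic in $z$; the set of characteristic values (where $F_\eps(z)$ fails to be invertible) coincides exactly with the non-linear eigenvalues of ${\bf T}_\eps$. The main tool I would invoke is the Gohberg--Sigal operator-valued Rouch\'e theorem.

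The converse direction is the easier one. Suppose $\tau_\eps \to \tau$ with corresponding eigenfunctions $\phi_\eps \in X(D)$ normalized so that $\|\phi_\eps\|_{X(D)} = 1$ and $\phi_\eps = \tau_\eps {\bf T}_\eps(\tau_\eps)\phi_\eps$. Writing
$$\phi_\eps \;=\; \tau_\eps\,{\bf T}_0(\tau)\phi_\eps \;+\; \tau_\eps\bigl({\bf T}_\eps(\tau_\eps) - {\bf T}_0(\tau)\bigr)\phi_\eps,$$
the second summand vanishes in norm by the assumed operator-norm convergence together with analyticity of ${\bf T}_0$ at $\tau$. Compactness of ${\bf T}_0(\tau)$ applied to the bounded sequence $\{\phi_\eps\}$ lets me extract a subsequence along which ${\bf T}_0(\tau)\phi_\eps \to \psi$ strongly in $X(D)$, forcing $\phi_\eps \to \tau\psi =: \phi$ with $\|\phi\|_{X(D)} = 1$. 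Passing to the limit in the eigenrelation yields $\phi = \tau\,{\bf T}_0(\tau)\phi$, so $\tau$ is a non-linear eigenvalue of ${\bf T}_0$.

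For the existence direction I would use the operator Rouch\'e theorem. By the analytic Fredholm theorem the characteristic values of $F_0$ are isolated in $U$, so I can choose $r>0$ so small that $\tau$ is the only characteristic value of $F_0$ in $\overline{B_r(\tau)}$ and the boundary $\Gamma := \partial B_r(\tau)$ contains no characteristic value. Then $F_0(z)^{-1}$ exists and is uniformly bounded on the compact set $\Gamma$. Using analyticity of ${\bf T}_\eps(z)$ in $z$ and the pointwise operator-norm convergence, Cauchy's integral formula applied on a slightly enlarged contour yields uniform operator-norm convergence on $\Gamma$; a Neumann series argument then produces $F_\eps(z)^{-1}$ uniformly on $\Gamma$ for all $\eps$ sufficiently small. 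The multiplicity count
$$m_\eps \;=\; \frac{1}{2\pi \mathrm{i}}\operatorname{tr}\oint_\Gamma F_\eps(z)^{-1} F_\eps'(z)\, dz$$
therefore depends continuously on $\eps$, but is a non-negative integer, hence constant. Since $m_0 \geq 1$ we obtain $m_\eps \geq 1$ for small $\eps$, producing a non-linear eigenvalue $\tau_\eps$ of ${\bf T}_\eps$ inside $B_r(\tau)$.

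The main obstacle is the bootstrap from pointwise operator-norm convergence (which is what Theorem \ref{tconverge} delivers) to uniform convergence on $\Gamma$; this is where the joint analyticity of ${\bf T}_\eps(z)$ in $z$ is essential, as is the fact that $\frac{1}{z}{\bf A}_\eps^{-1}{\bf C}$ contributes a pole only at the origin, lying outside $U$. Beyond that, invoking the Gohberg--Sigal logarithmic residue theorem in the present setting is routine once $F_\eps(z)^{-1}$ is controlled on $\Gamma$.
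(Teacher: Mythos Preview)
The paper does not actually prove this lemma: it is quoted verbatim as an abstract result from \cite{sharinonlinear} (Moskow, \emph{Nonlinear eigenvalue approximation for compact operators}), and the surrounding text only verifies that the hypotheses are met in the transmission-eigenvalue setting. So there is no ``paper's own proof'' to compare against here.

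That said, your sketch is a correct and standard route to this kind of statement, and in fact closely mirrors the argument in the cited reference. The converse direction via compactness and passage to the limit is clean; one minor point is that to control $\mathbf{T}_\eps(\tau_\eps) - \mathbf{T}_0(\tau_\eps)$ with $\tau_\eps$ moving you implicitly need the convergence to be locally uniform in $\tau$, which you correctly flag as the main obstacle and handle via Cauchy's integral formula on a slightly larger contour (this is exactly how \cite{sharinonlinear} upgrades pointwise to locally uniform convergence). For the existence direction, recasting the problem as characteristic values of the analytic Fredholm family $F_\eps(z) = \mathbf{I} - z\mathbf{T}_\eps(z)$ and invoking the Gohberg--Sigal operator Rouch\'e theorem is precisely the machinery used in the reference; your Neumann-series bound on $F_\eps(z)^{-1}$ along $\Gamma$ and the integer-valued trace integral argument are the right ingredients. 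Nothing is missing.
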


By Theorem \ref{tconverge} we have that ${\bf T}_\eps (\tau) \rightarrow {\bf T}_0(\tau)$ in the operator norm in any in region $U$ of $\C \setminus \{0\}$ and from the definition of the operator ${\bf T}_\eps (\tau)$ we have that it depends analytically on $\tau$ in any subset of the complex plane that does not include the origin. Finally to conclude the convergence of the eigenvalues, we need bounds on the eigenvalues independent of $\epsilon$.  The existence of real transmission eigenvalues  and monotonicity property with respect to the refractive index  are proven in \cite{cakginhad} and  \cite{cakonikirsch}. The monotonicity property implies $\epsilon$-independent bounds  on these real transmission eigenvalues since $A_\epsilon$ and $n_\epsilon$ are bounded above and below uniformly with respect to $\epsilon>0$ (more specifically such bounds can be obtained by modifying the proof of Theorem 2.6 and Theorem 2.10 in \cite{cakonikirsch} in a similar way as in the proof of Corollary 2.6 in  \cite{cakginhad}.)

\section{Asymptotic Formula for the Transmission Eigenvalues }  
Having proven the convergence of the transmission eigenvalues, we now want to obtain an asymptotic formula for the real transmission eigenvalues. To this end, we need to construct an appropriate corrector that will give an explicit formula for the first term in the asymptotic expansion for the transmission eigenvalues. For technical reasons that have to do with the rate of convergence of  ${\bf B}_\epsilon$ to ${\bf B}_0$ (which will be explained later on) we derive this corrector for the case when there is no contrast in the lower term, i.e.  $n_\epsilon=n$. To avoid technicalities in the presentation, the corrector will be derived for a homogeneous anisotropic media and the results can be generalized for an inhomogeneous media as in \cite{CMR}. Hence in this section we again assume that the coefficients $A$ and $n$ are constant in $D$.

\subsection*{Correction for the Operator ${\bf A}_{\eps}^{-1} - {\bf A}_{0}^{-1}$}
Consider the pair $(w_\eps , v_\eps)$ and $(w,v)$ in $X(D)$ defined by 
 \begin{eqnarray}
(w_\eps , v_\eps) = {\bf A}_{\eps}^{-1}(f,g) \quad \text{ and } \quad (w , v) = {\bf A}_{0}^{-1}(f,g) \label{eqwv} 
 \end{eqnarray}
and we assume that $w$ is a smooth function.   Without loss of generality in the following we perform the calculations only for one inhomogeneity. For multiple inhomogeneity one simply sum the correctors. To this end, assume that the defective region is of the form $\eps B$ where $B$ is centered at the origin with constant matrix $A_1$ being the constitutive parameter. We make the scaling $y=x/\eps$ and $\widetilde D=\frac{1}{\eps} D$ and let $w_\eps^{(1)} (y) \in H^1_0(\widetilde D)$ be the unique solution to 
 \begin{eqnarray}
\int\limits_{\widetilde D} \widetilde A \grad_y w_\eps^{(1)} \cdot \grad_y \overline{\varphi} + A_{min} w_\eps^{(1)} \overline{\varphi} \, \, dy= \int\limits_{\partial B} \big[ (A_1-A)\grad_x w(0) \cdot \nu \big] \overline{\varphi} \, \, ds_y \label{c1prob}
  \end{eqnarray}
with $\widetilde A=A_1 \, \chi_B + A(1-\chi_B)$.  

\begin{theorem}\label{errorthm}
Assume that  $(w_\eps , v_\eps)$ and $(w,v)$ are defined by \eqref{eqwv} with $w$ being a smooth function, then we have that 
 \begin{eqnarray}
\| w_\eps(x)-w(x)-\eps w(0)w^{(1)}_\eps(x/\eps) \|_{H^1(D)} + \| v_\eps(x)-v(x) \|_{H^1(D)}  =\mathcal{O}(\eps^{d/2+1}). \label{error1}
 \end{eqnarray}
\end{theorem}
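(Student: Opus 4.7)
The plan is to derive a variational equation for the error pair $(R_\eps, S_\eps) := (w_\eps - w - W_\eps,\, v_\eps - v)$, where $W_\eps(x) := \eps w(0) w^{(1)}_\eps(x/\eps)$, and then invoke the $\mathbb{T}$-coercivity of $\mathcal{A}_\eps$ (uniform in $\eps$) established in Section 3. First, one checks that $(R_\eps, S_\eps) \in X(D)$: since $w^{(1)}_\eps \in H^1_0(\widetilde D)$, the scaled corrector $W_\eps$ vanishes on $\partial D$, so $R_\eps - S_\eps = (w_\eps - v_\eps) - (w - v) - W_\eps \in H^1_0(D)$. Using that both $(w_\eps, v_\eps) = \mathbf{A}_\eps^{-1}(f,g)$ and $(w,v) = \mathbf{A}_0^{-1}(f,g)$ correspond to the same Riesz element, $\mathcal{A}_\eps((w_\eps, v_\eps); \cdot) = \mathcal{A}_0((w, v); \cdot)$ as functionals on $X(D)$, so that
$$\mathcal{A}_\eps\bigl((R_\eps, S_\eps);\, (\varphi_1, \varphi_2)\bigr) \,=\, \int_{\eps B}(A - A_1)\grad w \cdot \grad \overline{\varphi}_1 \, dx \,-\, \mathcal{A}_\eps\bigl((W_\eps, 0);\, (\varphi_1, \varphi_2)\bigr).$$

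The main step is to show the right-hand side is $\mathcal{O}(\eps^{d/2+1})\|(\varphi_1, \varphi_2)\|_{X(D)}$ uniformly in the test functions. I would split $\grad w(x) = \grad w(0) + [\grad w(x) - \grad w(0)]$ inside $\eps B$: by smoothness of $w$ the Taylor remainder is $\mathcal{O}(\eps)$ pointwise there, and Cauchy--Schwarz with $|\eps B|^{1/2} = \mathcal{O}(\eps^{d/2})$ bounds its contribution by $C\eps^{d/2+1}\|\varphi_1\|_{H^1(D)}$. The leading constant-gradient term $\int_{\eps B}(A-A_1)\grad w(0)\cdot\grad\overline{\varphi}_1\,dx$ integrates by parts (the divergence of the constant vector $(A-A_1)\grad w(0)$ vanishing) into a boundary integral $\int_{\partial(\eps B)}[(A-A_1)\grad w(0)\cdot\nu]\,\overline{\varphi}_1\,ds_x$. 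For the corrector term, the substitution $y = x/\eps$ rewrites $\mathcal{A}_\eps((W_\eps, 0);(\varphi_1,\varphi_2))$ as $\eps^{d-1}w(0)\int_{\widetilde D}\widetilde A\grad w^{(1)}_\eps\cdot\grad\overline{\widetilde\varphi}_1\,dy + A_{min}\eps^{d+1}w(0)\int_{\widetilde D}w^{(1)}_\eps\overline{\widetilde\varphi}_1\,dy$ with $\widetilde\varphi_1(y) := \varphi_1(\eps y)$; substituting the corrector equation then converts the principal gradient piece, after rescaling back, into a boundary integral on $\partial(\eps B)$ designed to cancel the Taylor leading, leaving only a zeroth-order residual of the form $A_{min}(\eps^{d-1}-\eps^{d+1})w(0)\int_{\widetilde D}w^{(1)}_\eps\overline{\widetilde\varphi}_1\,dy$.

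Bounding this zeroth-order residual by $\mathcal{O}(\eps^{d/2+1})\|\varphi_1\|_{H^1(D)}$ is the delicate point and, I expect, the main obstacle. A uniform $H^1(\widetilde D)$ bound on $w^{(1)}_\eps$ follows from Lax--Milgram on the corrector problem: its bilinear form is coercive with constant depending only on $A_{min}$ and $a_{min}$, and the trace functional on the fixed compact surface $\partial B$ is bounded independently of $\widetilde D$. However, combining this with the rescaling $\|\widetilde\varphi_1\|_{L^2(\widetilde D)} = \eps^{-d/2}\|\varphi_1\|_{L^2(D)}$ naively yields only $\mathcal{O}(\eps^{d/2-1})$, which is insufficient. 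The required gain should come from the exponential decay of $w^{(1)}_\eps$ away from $\partial B$ induced by the $A_{min}>0$ mass term in the corrector equation --- effectively localizing the $L^2$-pairing to a unit-size neighborhood of $\partial B$ in rescaled coordinates --- combined with a sharper H\"older--Sobolev estimate of $\varphi_1$ on the corresponding $\mathcal{O}(\eps)$-shell around $\partial(\eps B)$ in original coordinates.

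Once the source bound is in place, testing the error equation against $(\varphi_1, \varphi_2) = \mathbb{T}(R_\eps, S_\eps)$ and applying the $\mathbb{T}$-coercivity of $\mathcal{A}_\eps$ from Section 3 yields
$$c\,\|(R_\eps, S_\eps)\|_{X(D)}^2 \,\leq\, \bigl|\mathcal{A}_\eps\bigl((R_\eps, S_\eps);\, \mathbb{T}(R_\eps, S_\eps)\bigr)\bigr| \,\leq\, C\eps^{d/2+1}\,\|(R_\eps, S_\eps)\|_{X(D)},$$
whence the stated $H^1(D)$ estimates for both $R_\eps = w_\eps - w - \eps w(0)w^{(1)}_\eps(x/\eps)$ and $S_\eps = v_\eps - v$ follow.
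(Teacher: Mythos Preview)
Your variational setup in $D$ is correct up to the point you yourself flag as the obstacle, and that obstacle is genuine: the zeroth-order residual $A_{min}(\eps^{d+1}-\eps^{d-1})w(0)\int_{\widetilde D}w^{(1)}_\eps\overline{\widetilde\varphi}_1\,dy$ cannot be bounded by $C\eps^{d/2+1}\|\varphi_1\|_{H^1(D)}$ with the tools you propose. In $x$-variables the term is of order $\eps^{-1}\int_D w^{(1)}_\eps(x/\eps)\overline{\varphi}_1(x)\,dx$; applying H\"older and the Sobolev embedding $H^1(D)\hookrightarrow L^q(D)$ gives at best $\eps^{d-1-d/q}\|\varphi_1\|_{H^1(D)}$, since $\|w^{(1)}_\eps\|_{L^{q'}(\widetilde D)}=O(1)$ even under exponential decay. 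For $d=2$ (letting $q\to\infty$) and $d=3$ (taking $q=6$) the best exponent is $1$ and $3/2$ respectively, a full power of $\eps$ short of the required $d/2+1$. Localizing to an $\eps$-shell does not help because the residual is a bulk term, not a boundary term: the scaling mismatch between the gradient and mass parts of $\mathcal{A}_\eps$ under $x\mapsto x/\eps$ is intrinsic.

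The paper sidesteps this by carrying out the whole estimate in the rescaled domain $\widetilde D$ rather than in $D$. One defines $e^w_\eps(y)=w_\eps(\eps y)-w(\eps y)-\eps w(0)w^{(1)}_\eps(y)$, $e^v_\eps(y)=v_\eps(\eps y)-v(\eps y)$, and works with the form $\widetilde{\mathcal{A}}_\eps$ on $X(\widetilde D)$ built from $\grad_y$ and a \emph{unit} mass coefficient. The point is that the corrector equation \eqref{c1prob} is precisely the statement that the \emph{full} form $\int_{\widetilde D}\bigl(\widetilde A\grad_y w^{(1)}_\eps\cdot\grad_y\overline{\varphi}+A_{min}w^{(1)}_\eps\overline{\varphi}\bigr)\,dy$ equals the boundary functional on $\partial B$, so the gradient and mass pieces are consumed together and no zeroth-order residual appears. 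After integration by parts one is left only with $\eps^2\int_B\overline{\varphi}_1\,\grad_x\!\cdot(A-A_1)\grad_x w(\eps y)\,dy$ and a boundary term carrying the Taylor remainder $\grad_x w(\eps y)-\grad_x w(0)=\mathcal{O}(\eps)$, both of size $\mathcal{O}(\eps^2)\|(\varphi_1,\varphi_2)\|_{X(\widetilde D)}$. The $\mathbb{T}$-coercivity of $\widetilde{\mathcal{A}}_\eps$ on $X(\widetilde D)$, whose constant is $\eps$-independent because the coefficients of $\widetilde{\mathcal{A}}_\eps$ are, then yields $\|(e^w_\eps,e^v_\eps)\|_{H^1(\widetilde D)\times H^1(\widetilde D)}=\mathcal{O}(\eps^2)$. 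Rescaling the $H^1$-norm from $\widetilde D$ back to $D$ contributes a factor $\eps^{d/2-1}$, producing the claimed $\mathcal{O}(\eps^{d/2+1})$. In short: move to $\widetilde D$ first, so that the corrector cancels the source exactly at the level of the full bilinear form, and let the final norm rescaling supply the $\eps^{d/2-1}$.
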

\begin{proof}
Recall that $x=\eps y$ and we define the error functions in $X(\widetilde D)$ $\big($note that $w_\eps^{(1)} (y) \in H^1_0(\widetilde D)$$\big)$
$$ e^w_\eps = w_\eps(\eps y)-w(\eps y)-\eps w(0) w^{(1)}_\eps(y) \quad \text{and} \quad e^v_\eps = v_\eps(x)-v(x). $$
Now let $(\varphi_1 , \varphi_2) \in {X(\tilde D) } $ and define the sesquilinear form 
\begin{eqnarray*}
\widetilde{\mathcal{A}}_\eps \big( (e^w_\eps ,e^v_\eps ) ; (\varphi_1,\varphi_2) \big) := \int\limits_{\widetilde D} \widetilde A \grad_y e^w_\eps \cdot \grad_y \overline{\varphi}_1 + A_{min} e^w_\eps \overline{\varphi}_1 \, dy -\int\limits_{\widetilde D}   \grad_y e^v_\eps \cdot \grad_y \overline{\varphi}_2 + e^v_\eps \overline{\varphi}_2 \, dx.
 \end{eqnarray*}
Using \eqref{eqwv} we have that
\begin{eqnarray*}
 &&\widetilde{\mathcal{A}}_\eps \big( (e^w_\eps ,e^v_\eps ) ; (\varphi_1,\varphi_2) \big) = \int\limits_{B} (A_1-A) \grad_y w(\eps y) \cdot \grad_y \overline{\varphi}_1  \, dy \\
&& \hspace{2in} - \eps w(0) \int\limits_{\widetilde D} \widetilde A \grad_y w_\eps^{(1)} \cdot \grad_y \overline{\varphi}_1 + A_{min} w_\eps^{(1)} \overline{\varphi}_1 \, \, dy.
 \end{eqnarray*}
Using integration by parts and \eqref{c1prob} gives that  
\begin{eqnarray*}
&&\widetilde{\mathcal{A}}_\eps \big( (e^w_\eps ,e^v_\eps ) ; (\varphi_1,\varphi_2) \big) = \eps^2  \int\limits_{B} \overline{\varphi}_1 \grad_x \cdot (A-A_1) \grad_x w(\eps y)    \, dy \\
 && \hspace{1in} + \eps w(0)\int\limits_{\partial B} \Big[ (A_1-A)( \grad_x w(\eps y)-\grad_x w(0)) \cdot \nu \Big] \overline{\varphi}_1 \, \, ds_y.
  \end{eqnarray*}
  
Recall that $w$ is smooth, therefore $ \grad_x \cdot (A-A_1) \grad_x w(\eps y)$ is bounded in $B$. Also notice that by Taylor's expansion we have that the term $( \grad_x w(\eps y)-\grad_x w(0)) = \mathcal{O}(\eps)$. Therefore, we can conclude that there is a constant $C$ independent of $\eps$ such that  
$$ \left| \widetilde{\mathcal{A}}_\eps \big( (e^w_\eps ,e^v_\eps ) ; (\varphi_1,\varphi_2) \big)  \right| \leq C \eps^2 \| (\varphi_1 , \varphi_2) \|_{H^1(\tilde D) \times H^1(\tilde D) } $$
Using the $\mathbb{T}$-coercivity of the sesquilinear form $\tilde{\mathcal{A}}_\eps \big( \cdot \, ; \cdot \big)$ in $X(\widetilde D)$ gives that 
\begin{eqnarray}
\| w_\eps(\eps y)-w(\eps y)-\eps w(0) w^{(1)}_\eps(y) \|_{H^1(\widetilde D)} + \| v_\eps(\eps y)-v(\eps y) \|_{H^1( \widetilde D)}  \leq C \eps^2 , \label{unscaled}
  \end{eqnarray}
and the result follows from scaling. 
\end{proof}

Notice that from \eqref{c1prob} we have that $\| w^{(1)}_\eps(y) \|_{H^1(\widetilde D)}$ is bounded independently of $\eps$ by the Lax-Milgram lemma. Therefore by scaling we have that $\| w^{(1)}_\eps(x/\eps) \|_{H^1(D)} \leq C \eps^{d/2-1}$ with $C$ independent of $\eps$, which gives the following result. 

\begin{corollary}
Assume that  $(w_\eps , v_\eps)$ and $(w,v)$ are defined by \eqref{eqwv} with $w$ being a smooth function then we have that 
 \begin{eqnarray}
\| w_\eps(x)-w(x) \|_{H^1(D)} +\| v_\eps(x)-v(x) \|_{H^1(D)} =\mathcal{O}(\eps^{d/2}). \label{error2}
 \end{eqnarray}
\end{corollary}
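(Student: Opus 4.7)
The plan is to combine Theorem \ref{errorthm} with a triangle inequality, so the only new ingredient I need is a good estimate on the norm of the corrector $w^{(1)}_\eps(\cdot/\eps)$ in $H^1(D)$. Observe that the $v$-component of the claim is already contained in Theorem \ref{errorthm}, which gives $\| v_\eps - v \|_{H^1(D)}=\mathcal{O}(\eps^{d/2+1})$, an even stronger estimate. For the $w$-component, the triangle inequality yields
\begin{equation*}
\| w_\eps - w \|_{H^1(D)} \leq \| w_\eps - w - \eps w(0) w^{(1)}_\eps(\cdot/\eps) \|_{H^1(D)} + \eps |w(0)|\,\| w^{(1)}_\eps(\cdot/\eps) \|_{H^1(D)},
\end{equation*}
and the first term is $\mathcal{O}(\eps^{d/2+1})$ by Theorem \ref{errorthm}. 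So everything reduces to controlling $\| w^{(1)}_\eps(\cdot/\eps) \|_{H^1(D)}$.

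Next I would bound $\| w^{(1)}_\eps \|_{H^1(\widetilde D)}$ uniformly in $\eps$ using the variational problem \eqref{c1prob}. The sesquilinear form on the left-hand side of \eqref{c1prob} is coercive on $H^1_0(\widetilde D)$ with a constant depending only on $\min\{A_{\min},a_{\min}\}>0$, independent of the size of $\widetilde D$ (this is the standard coercivity that underlies the $\mathbb{T}$-coercivity argument used earlier). For the right-hand side, since $B$ is a fixed smooth bounded set centered at the origin and $w(0)$ is a fixed number, the functional $\varphi\mapsto \int_{\partial B}[(A_1-A)\nabla_x w(0)\cdot\nu]\overline\varphi\,ds_y$ is bounded on $H^1(\widetilde D)$ by a constant $C|w(0)|$ independent of $\eps$ via the trace theorem on any fixed neighborhood of $B$. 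By Lax--Milgram, therefore, $\| w^{(1)}_\eps \|_{H^1(\widetilde D)}\leq C$ uniformly in $\eps$.

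The third step is the change-of-variables computation: if $\psi(x)=\phi(x/\eps)$ with $\phi$ supported in $\widetilde D$, then
\begin{equation*}
\|\psi\|_{L^2(D)}^2 = \eps^d\|\phi\|_{L^2(\widetilde D)}^2,\qquad \|\nabla_x\psi\|_{L^2(D)}^2 = \eps^{d-2}\|\nabla_y\phi\|_{L^2(\widetilde D)}^2,
\end{equation*}
so the gradient term dominates for small $\eps$ and $\| w^{(1)}_\eps(\cdot/\eps)\|_{H^1(D)}\leq C\eps^{d/2-1}$. Multiplying by the prefactor $\eps$ gives $\eps\| w^{(1)}_\eps(\cdot/\eps)\|_{H^1(D)}\leq C\eps^{d/2}$, which is the dominant term and yields the desired $\mathcal{O}(\eps^{d/2})$ bound.

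The only subtlety worth stressing is that the Lax--Milgram constants on $\widetilde D$ must not degenerate as $\eps\to 0$ (i.e.\ as $\widetilde D$ grows). This is automatic here because the coercivity bound depends only on the pointwise ellipticity constants of $\widetilde A$ and on the zero Dirichlet boundary condition on $\partial\widetilde D$ (via Poincar\'e on the bounded $\widetilde D$), while the boundedness of the load functional depends only on the fixed surface $\partial B$ sitting inside every $\widetilde D$ for $\eps$ small. Once this uniformity is in hand, the proof reduces to the two lines of triangle inequality and scaling sketched above.
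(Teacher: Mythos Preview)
Your approach is the same as the paper's: invoke Theorem~\ref{errorthm}, bound $\|w^{(1)}_\eps\|_{H^1(\widetilde D)}$ uniformly in $\eps$ by Lax--Milgram applied to \eqref{c1prob}, and then rescale to get $\|w^{(1)}_\eps(\cdot/\eps)\|_{H^1(D)}\le C\eps^{d/2-1}$. The triangle inequality finishes the job exactly as you say.

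One small slip in your last paragraph: you justify the uniform coercivity on $\widetilde D$ by invoking Poincar\'e. That would actually be dangerous, since the Poincar\'e constant on $\widetilde D=\tfrac{1}{\eps}D$ blows up like $1/\eps$ as $\eps\to 0$. The reason the coercivity constant is genuinely $\eps$-independent is that the bilinear form in \eqref{c1prob} already carries the zeroth-order term $A_{\min}\,w^{(1)}_\eps\overline{\varphi}$, so taking $\varphi=w^{(1)}_\eps$ gives coercivity with constant $\min\{A_{\min},a_{\min}\}$ directly, with no appeal to Poincar\'e. Once you replace that remark, your argument is complete and matches the paper.
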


Notice that the corrector $w^{(1)}_\eps(y)$ depends on $\eps$, hence we now wish to construct a corrector that is independent of the small parameter $\eps$. To this end, we define the function $w^{(1)} (y) \in H^1(\R^d)$ such that for all $\varphi \in H^1(\R^d)$
 \begin{eqnarray}
\int\limits_{\R^d} \widetilde A \grad_y w^{(1)} \cdot \grad_y \overline{\varphi} + A_{min} w^{(1)} \overline{\varphi} \, \, dy= \int\limits_{\partial B} \big[ (A_1-A)\grad_x w(0) \cdot \nu \big] \overline{\varphi} \, \, ds_y \label{c2prob}
  \end{eqnarray}
Note that the variational problem \eqref{c2prob} implies that 
$$-   \grad_y \cdot \widetilde A \grad_y w^{(1)} + A_{min} w^{(1)}=0 \quad \text{in } \quad \R^d \setminus \partial B.$$ 
We now have $\left|w^{(1)} \right| \rightarrow 0$ as $|y| \rightarrow \infty$, exponentially fast \cite{coltonkress}. This gives that $\grad_y w^{(1)}$ decays faster than the gradient of a solution to Laplace's equation, therefore 
$$ \|  \grad_y w^{(1)}(x/\eps) \|_{L^{\infty}(\partial D)} = o(\eps^d) \quad \text{ for } \quad d=2,3.$$

\begin{theorem}
Let $ w^{(1)}_\eps$ and $w^{(1)}$ be defined as the solutions to \eqref{c1prob} and \eqref{c2prob} respectively, then we have that 
 \begin{eqnarray*}
&&\| w^{(1)}_\eps(x/ \eps) -w^{(1)} (x/\eps)  \|_{H^1(D)}  =o(\eps^{d/2+2}) \,\,\, \text{ for } \,\, d=2, \\
&&\| w^{(1)}_\eps(x/ \eps) -w^{(1)} (x/\eps)  \|_{H^1(D)}  =o(\eps^{d/2+5/2}) \,\,\, \text{ for } \,\, d=3.
 \end{eqnarray*}
\end{theorem}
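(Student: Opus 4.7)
The strategy is to show that, on the rescaled domain $\widetilde D = D/\eps$, the difference $e(y) := w^{(1)}_\eps(y) - w^{(1)}(y)$ is exponentially small in $\eps$, and then transfer the estimate back to $D$ via scaling. The first step is to derive a variational identity for $e$. For any $\varphi \in H^1_0(\widetilde D)$, extending $\varphi$ by zero to $H^1(\R^d)$ so that it is a legitimate test function in \eqref{c2prob} and then subtracting \eqref{c1prob}, the boundary integrals over $\partial B$ cancel, so $e$ satisfies
\begin{equation*}
\int_{\widetilde D} \widetilde A \grad_y e \cdot \grad_y \bar\varphi + A_{min}\, e\, \bar\varphi \, dy = 0 \qquad \forall \varphi \in H^1_0(\widetilde D).
\end{equation*}
Since $w^{(1)}_\eps \in H^1_0(\widetilde D)$, the trace of $e$ on $\partial \widetilde D$ equals $-w^{(1)}|_{\partial \widetilde D}$, so $e$ is the Dirichlet lift of this boundary datum for the homogeneous coercive equation.

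Next I would bound $\|e\|_{H^1(\widetilde D)}$ by a standard lift-and-Lax-Milgram argument. Choose a smooth cutoff $\chi$ supported in the fixed-width tube $T := \{y \in \widetilde D : \dist(y,\partial \widetilde D) < 1\}$, equal to $1$ on $\partial \widetilde D$ with $|\grad \chi|$ uniformly bounded, and set $\eta := -\chi\, w^{(1)}$. Then $e - \eta \in H^1_0(\widetilde D)$ and the bilinear form $\int_{\widetilde D} \widetilde A \grad u \cdot \grad \bar u + A_{min}|u|^2 \, dy$ is coercive on $H^1_0(\widetilde D)$ with constant $\min(a_{min}, A_{min})$, independent of $\eps$. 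Lax-Milgram then gives $\|e\|_{H^1(\widetilde D)} \leq C \|\eta\|_{H^1(\widetilde D)}$. Since $\dist(0, \partial D) \geq c_0 > 0$, the tube $T$ is contained in $\{|y| \geq c_0/\eps - 1\}$ for $\eps$ small, so the exponential decay of $w^{(1)}$ and $\grad w^{(1)}$ recorded before the theorem yields $\sup_T(|w^{(1)}| + |\grad w^{(1)}|) \leq C e^{-c/\eps}$. Combined with $|T| = O(\eps^{-(d-1)})$ this gives $\|\eta\|_{H^1(\widetilde D)}^2 \leq C\eps^{-(d-1)} e^{-2c/\eps} = o(\eps^N)$ for every $N$.

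Finally, the change of variables $y = x/\eps$ produces the identity
\begin{equation*}
\|f(\cdot/\eps)\|^2_{H^1(D)} = \eps^d \|f\|^2_{L^2(\widetilde D)} + \eps^{d-2} \|\grad_y f\|^2_{L^2(\widetilde D)},
\end{equation*}
so $\|e(\cdot/\eps)\|^2_{H^1(D)} \leq C \eps^{d-2} \|e\|^2_{H^1(\widetilde D)}$, which is $o(\eps^M)$ for every $M$. In particular, it dominates the polynomial rates $\eps^{d/2+2}$ for $d=2$ and $\eps^{d/2+5/2}$ for $d=3$, proving both parts of the theorem.

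The only delicate point, as I see it, is the $\eps$-uniformity of the coercivity constant on the expanding domain $\widetilde D$. This is precisely what the zeroth-order term $A_{min}|u|^2$ is there to provide: it gives full $H^1$-coercivity without invoking a Poincaré inequality, whose constant would degenerate as $|\widetilde D| \to \infty$. Once this is noted, the exponential decay of $w^{(1)}$ makes the claimed polynomial rates an enormous overestimate, which is consistent with the fact that the theorem's rates are chosen just to be high enough to be dominated in the eigenvalue asymptotic expansion that follows.
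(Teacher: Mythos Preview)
Your proof is correct and takes a genuinely different route from the paper. The paper tests the coercive form directly against $u_\eps = w^{(1)}_\eps - w^{(1)}$, integrates by parts, and reduces everything to a single boundary integral $\int_{\partial \widetilde D} A\,\partial_\nu w^{(1)}\,\bar u_\eps\,ds_y$, which it then bounds using only the polynomial decay $\|\grad_y w^{(1)}(x/\eps)\|_{L^\infty(\partial D)} = o(\eps^d)$; this produces exactly the rates in the statement. Your Dirichlet-lift argument avoids the integration by parts entirely: you push all of the smallness into the lift $\eta$, supported in an annulus at distance $\sim c_0/\eps$ from the origin, and exploit the full exponential decay of $w^{(1)}$ to obtain an estimate that is exponentially small in $1/\eps$, far stronger than required. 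One small caveat: the paper records exponential decay only for $w^{(1)}$ itself and states merely the $o(\eps^d)$ bound for its gradient, whereas your bound on $\|\eta\|_{H^1}$ also uses exponential decay of $\grad w^{(1)}$. This is true---interior gradient estimates for the constant-coefficient equation $-\grad\cdot A\grad w^{(1)} + A_{min}w^{(1)} = 0$ on unit balls transfer the exponential decay from $w^{(1)}$ to $\grad w^{(1)}$---but it deserves a sentence rather than being cited as ``recorded.'' Your observation that the zeroth-order term $A_{min}|u|^2$ is what makes the coercivity constant $\eps$-uniform on the expanding domain $\widetilde D$ is exactly the right point and is used implicitly in the paper's argument as well.
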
  
\begin{proof}
Let $u_\eps =w^{(1)}_\eps(y) -w^{(1)} (y)$, there exists a constant $\alpha >0$ such that 
 \begin{eqnarray*}
&& \hspace{-0.2in} \alpha \| u_\eps  \|_{H^1(\widetilde D)} ^2 \leq  \int\limits_{\widetilde D} \tilde A \grad_y u_\eps \cdot \grad_y \overline{u}_\eps + A_{min} |u_\eps|^{2} \, \, dy\\
&& \hspace{0.2in} =  \int\limits_{\widetilde D} \widetilde A \grad_y w^{(1)}_\eps \cdot \grad_y \overline{u}_\eps + A_{min} w^{(1)}_\eps \overline{u}_\eps \, \, dy  -  \int\limits_{\widetilde D} \widetilde A \grad_y w^{(1)} \cdot \grad_y \overline{u}_\eps + A_{min} w^{(1)} \overline{u}_\eps \, \, dy \\
&& \hspace{0.2in} =  \int\limits_{\partial B} \big[ (A_1-A)\grad_x w(0) \cdot \nu \big] \overline{u}_\eps \, \, ds_y  -  \int\limits_{\widetilde D} \widetilde A \grad_y w^{(1)} \cdot \grad_y \overline{u}_\eps + A_{min} w^{(1)} \overline{u}_\eps \, \, dy 
 \end{eqnarray*}
Notice that the variational form \eqref{c2prob} implies that 
$$\left( A \frac{\partial  w^{(1)} }{\partial \nu_y} \right)^+ - \left( A_1 \frac{\partial  w^{(1)} }{\partial \nu_y} \right)^-  = (A_1-A)\grad_x w(0) \cdot \nu  \quad \text{ on } \quad \partial B.$$
Therefore integration by parts gives that  
 \begin{eqnarray*}
&& \hspace{-0.3in} \int\limits_{\partial B} \big[ (A_1-A)\grad_x w(0) \cdot \nu \big] \overline{u}_\eps \, \, ds_y  -  \int\limits_{\widetilde D} \tilde A \grad_y w^{(1)} \cdot \grad_y \overline{u}_\eps + A_{min} w^{(1)} \overline{u}_\eps \, \, dy 
\\
&& \hspace{0.2in} =   \int\limits_{\partial B} \big[ (A_1-A)\grad_x w(0) \cdot \nu \big] \overline{u}_\eps \, \, ds_y  +  \int\limits_{\widetilde D} \overline{u}_\eps (\grad_y \cdot \widetilde A \grad_y w^{(1)} - A_{min} w^{(1)} ) \, \, dy \\
&& \hspace{0.7in} -  \int\limits_{\partial B} \left[ \left( A \frac{\partial  w^{(1)} }{\partial \nu_y} \right)^+ - \left( A_1 \frac{\partial  w^{(1)} }{\partial \nu_y} \right)^- \right] \overline{u}_\eps \, \, ds_y +  \int\limits_{\partial \widetilde{D}} A \frac{\partial  w^{(1)} }{\partial \nu_y}  \overline{u}_\eps \, \, ds_y .
 \end{eqnarray*}
 Now by using the boundary value problem for $ w^{(1)}$ we have that 
$$ \alpha \| u_\eps  \|_{H^1(\widetilde D)} ^2 \leq  \left| \, \,  \int\limits_{\partial \widetilde{D}} A \frac{\partial  w^{(1)} }{\partial \nu_y}  \overline{u}_\eps \, \, ds_y  \, \, \right| = \eps^{1-d} \left| \, \,  \int\limits_{\partial {D}} (A\grad_y  w^{(1)} (x/ \eps) \cdot \nu)  \overline{u}_\eps(x/ \eps) \, \, ds_x \, \,  \right| $$
$$ \hspace*{-3cm} \leq C\eps^{1-d} \|  \grad_y w^{(1)}(x/\eps) \|_{L^{\infty}(\partial D)} \| u_\eps (x/\eps) \|_{H^1( D)}.$$
 By the scaling we have that 
   \begin{eqnarray*}
&& \hspace{-0.2in}\| u_\eps  \|_{H^1(\widetilde D)} ^2 \leq   C\eps^{1-d/2} \|  \grad_y w^{(1)}(x/\eps) \|_{L^{\infty}(\partial D)} \| u_\eps (x/\eps) \|_{H^1( \widetilde D)}.
 \end{eqnarray*}
Since 
$$ \|  \grad_y w^{(1)}(x/\eps) \|_{L^{\infty}(\partial D)} = o(\eps^d) \quad \text{ for } \quad d=2,3$$
we can conclude that 
$$ \| u_\eps  \|_{H^1(\tilde D)} = o(\eps^2) \,\,\, \text{ for } \,\, d=2  \, \, \text{ and } \, \,  \| u_\eps  \|_{H^1(\tilde D)} = {o}(\eps^{5/2})  \, \, \text{ for } \,\, d=3,$$
which gives the result by scaling the norm back to the domain $D$. 
\end{proof}

By appealing to the triangle inequality we have the following result. 
\begin{corollary} \label{correctorconv}
Let $w^{(1)}$ be the solutions to \eqref{c2prob}, also assume that  $(w_\eps , v_\eps)$ and $(w,v)$ are defined by \eqref{eqwv} with $w$ being a smooth function then we have that 
 \begin{eqnarray}
\| w_\eps(x)-w(x) - \eps w(0) w^{(1)}(x/\eps) \|_{H^1(D)}  =\mathcal{O}(\eps^{d/2+1}). \label{error3}
 \end{eqnarray}
\end{corollary}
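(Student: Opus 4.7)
The strategy is to split the quantity to be estimated by inserting the intermediate term $\eps w(0) w^{(1)}_\eps(x/\eps)$ and applying the triangle inequality, so that the corollary reduces to a direct combination of Theorem \ref{errorthm} and the preceding theorem comparing $w^{(1)}_\eps$ with $w^{(1)}$. Concretely, I would write
\[
\bigl\| w_\eps(x)-w(x)-\eps w(0)w^{(1)}(x/\eps)\bigr\|_{H^1(D)} \le T_1 + T_2,
\]
where
\[
T_1 = \bigl\| w_\eps(x)-w(x)-\eps w(0)w^{(1)}_\eps(x/\eps)\bigr\|_{H^1(D)}, \qquad T_2 = \eps\,|w(0)|\,\bigl\| w^{(1)}_\eps(x/\eps)-w^{(1)}(x/\eps)\bigr\|_{H^1(D)}.
\]
Here $|w(0)|$ is bounded by the smoothness assumption on $w$, so it can be absorbed into a constant.

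Next, Theorem \ref{errorthm} gives $T_1 = \mathcal{O}(\eps^{d/2+1})$ directly. For $T_2$, the previous theorem provides $\|w^{(1)}_\eps(\cdot/\eps)-w^{(1)}(\cdot/\eps)\|_{H^1(D)} = o(\eps^{d/2+2})$ in dimension $d=2$ and $o(\eps^{d/2+5/2})$ in dimension $d=3$. Multiplying by the extra factor of $\eps$, I obtain $T_2 = o(\eps^{d/2+3})$ for $d=2$ and $o(\eps^{d/2+7/2})$ for $d=3$, which in either case is strictly of higher order than $\eps^{d/2+1}$.

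Combining the two estimates via the triangle inequality yields the stated bound $\mathcal{O}(\eps^{d/2+1})$, since $T_1$ is the dominant contribution and $T_2$ is absorbed into it. There is essentially no obstacle to overcome here; the work has already been done in Theorem \ref{errorthm} (replacing the scaled interior corrector $w^{(1)}_\eps$ by the actual error) and in the decay/transmission estimate of the preceding theorem (replacing $w^{(1)}_\eps$ on the truncated domain $\widetilde{D}$ by the whole-space corrector $w^{(1)}$ at the cost of a boundary flux of $w^{(1)}$ on $\partial\widetilde{D}$, which decays exponentially). The corollary is thus essentially just a packaging step that produces a single, $\eps$-independent corrector $w^{(1)}$ defined on all of $\R^d$ in place of $w^{(1)}_\eps$, which is what will be needed in the subsequent asymptotic expansion of the transmission eigenvalues.
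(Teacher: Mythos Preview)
Your argument is correct and is exactly the approach the paper indicates: it simply states ``By appealing to the triangle inequality we have the following result,'' i.e., inserting $\eps w(0) w^{(1)}_\eps(x/\eps)$ and combining Theorem~\ref{errorthm} with the preceding comparison theorem for $w^{(1)}_\eps$ and $w^{(1)}$.
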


The arguments used in this section carry over to the case of multiple inhomogeneities. Indeed, for multiple inhomogeneities centered at $z_m$ with anisotropic material parameter $A_m$ we have that by using translation and summing over a finite number of inhomogeneities gives that the corrector takes the form
$$\widetilde{w}^{(1)}(x/\eps)=\sum\limits_{m=1}^M w(z_m) w^{(1)}_m(x/\eps)$$
where $w^{(1)}_m(x/\eps)$ is the solution to 
 \begin{eqnarray*}
\int\limits_{\R^d } \widetilde{A}_m \grad_y w_m^{(1)} \cdot \grad_y \overline{\varphi} + A_{min} w_m^{(1)} \overline{\varphi} \, \, dy= \int\limits_{\partial B_m} \big[ (A_m-A)\grad_x w(z_m) \cdot \nu \big] \overline{\varphi} \, \, ds_y  
\end{eqnarray*}
for all $\varphi \in H^1(\R^d)$ with $\widetilde{A}_m=A_m \, \chi_{B_m} + A(1-\chi_{B_m})$. The convergence results in this section still hold for $w(0) w^{(1)}(x/\eps)$ replaced by $\widetilde{w}^{(1)}(x/\eps)$.

\subsection*{Asymptotic Formulas}
Finally we have all the ingredients to give an asymptotic formula for the transmission eigenvalues using the results in \cite{sharinonlinear}. Note that we have assumed that contrast in the defect is only in the matrix valued material parameter (i.e. $n_\eps=n$ for all $\eps>0$), and we still take $A$ and $A_m$ constant matrices. Under this assumption we have that the operator ${\bf T}_{\eps}(\tau) = {\bf A}_{\eps}^{-1} {\bf B}_{0}+ \frac{1}{\tau} {\bf A}_{\eps}^{-1}{\bf C}$ converges in the operator norm.

We now recall Theorem 4.1 of \cite{sharinonlinear} which is a generalization of Osborn's Theorem (see \cite{osborn} for Osborn's result) to nonlinear eigenvalue problems. 
\begin{theorem}\label{shariresult}
Let $X$ be a Hilbert space and ${\bf T}_{\eps}(\tau) : X \to  X$ be a compact operator valued functions of $\tau$ which are
analytic in a region $U$ of the complex plane, such that $ \left\| {\bf T}_{\eps}(\tau)-{\bf T}_{0}(\tau) \right\| \rightarrow 0$ for all $\tau \in U$. Now assume that $\tau$ is a simple nonlinear eigenvalue of ${\bf T}_{0}(\tau)$ with normalized eigenfunction $\phi$. Then if 
$$\tau^2 \left( \frac{d}{d \tau} {\bf T}_{0}(\tau) \phi , \phi \right)  \neq - 1$$
we have that 
$$\hspace{-2.5in} \tau_\eps = \tau +\tau^2 \frac{ \big( ({\bf T}_{0}(\tau) - {\bf T}_{\eps}(\tau) )\phi , \phi \big)}{1+\tau^2 \left(\frac{d}{d \tau} {\bf T}_{0}(\tau) \phi , \phi \right) } $$
 $$\hspace{1in} + \mathcal{O} \left( \sup\limits_{\tau \in U} \left\| ({\bf T}_{\eps}(\tau)-{\bf T}_{0}(\tau)) \phi \right\|  \left\| ({\bf T}^*_{\eps}(\tau)-{\bf T}^*_{0}(\tau)) \phi \right\| \right)$$
with $\tau_\eps$ is a nonlinear eigenvalue for $ {\bf T}_{\eps}(\tau)$.
\end{theorem}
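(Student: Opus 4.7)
My plan is to reduce the nonlinear eigenvalue problem to a family of linear perturbation problems at fixed $\tau$ via Riesz spectral projections, and then combine a classical Osborn-type first-order perturbation formula at $\tau=\tau_0$ with a one-variable implicit function argument to isolate $\tau_\eps$.

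First, I would localize the spectrum around $1$. Because $\tau_0{\bf T}_0(\tau_0)\phi=\phi$ with $\phi$ normalized and the eigenvalue simple, pick a small positively oriented loop $\Gamma\subset\C$ enclosing $1$ and no other spectral points of $\tau_0{\bf T}_0(\tau_0)$, and define
$$P_\eps(\tau):=\frac{1}{2\pi i}\oint_\Gamma \bigl(\zeta I-\tau{\bf T}_\eps(\tau)\bigr)^{-1}\,d\zeta.$$
Since ${\bf T}_\eps(\tau)$ is analytic in $\tau\in U$ and converges in operator norm to ${\bf T}_0(\tau)$, $P_\eps(\tau)$ is analytic in $\tau$ on a neighborhood of $\tau_0$ and continuous at $\eps=0$, hence rank one for $\eps$ small enough. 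Consequently $\tau{\bf T}_\eps(\tau)$ has a single analytic eigenvalue $\nu_\eps(\tau)$ close to $1$, and the nonlinear eigenvalue equation $\tau_\eps{\bf T}_\eps(\tau_\eps)\psi=\psi$ is equivalent, for $\psi$ in the range of $P_\eps(\tau_\eps)$, to the scalar characteristic equation $\nu_\eps(\tau_\eps)=1$.

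Second, I would apply Osborn's classical first-order perturbation formula at $\tau=\tau_0$ to the linear compact operator $\tau_0{\bf T}_\eps(\tau_0)$ perturbing $\tau_0{\bf T}_0(\tau_0)$, giving
$$\nu_\eps(\tau_0)-\nu_0(\tau_0)=\tau_0\bigl(({\bf T}_\eps(\tau_0)-{\bf T}_0(\tau_0))\phi,\phi\bigr)+R_\eps,$$
where the remainder satisfies $R_\eps=\mathcal{O}\bigl(\|({\bf T}_\eps-{\bf T}_0)\phi\|\,\|({\bf T}^*_\eps-{\bf T}^*_0)\phi\|\bigr)$ uniformly for $\tau$ near $\tau_0$. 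I would then compute $\nu_0'(\tau_0)$ via the Feynman--Hellmann identity. With $\phi_\tau$ the analytic branch of normalized eigenfunctions of ${\bf T}_0(\tau)$ satisfying $\phi_{\tau_0}=\phi$, we have $\nu_0(\tau)=\tau\bigl({\bf T}_0(\tau)\phi_\tau,\phi_\tau\bigr)$, and differentiating at $\tau_0$ together with the eigenvalue equation $({\bf T}_0(\tau_0)\phi,\phi)=1/\tau_0$ gives
$$\nu_0'(\tau_0)=\frac{1}{\tau_0}+\tau_0\Bigl(\tfrac{d}{d\tau}{\bf T}_0(\tau_0)\phi,\phi\Bigr).$$
The hypothesis $\tau_0^2\bigl(\tfrac{d}{d\tau}{\bf T}_0(\tau_0)\phi,\phi\bigr)\neq-1$ is exactly $\nu_0'(\tau_0)\neq 0$, so the implicit function theorem applies. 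Writing
$$1=\nu_\eps(\tau_\eps)=\nu_0(\tau_0)+\nu_0'(\tau_0)(\tau_\eps-\tau_0)+\bigl[\nu_\eps(\tau_0)-\nu_0(\tau_0)\bigr]+\mathcal{O}\bigl((\tau_\eps-\tau_0)^2\bigr),$$
substituting the Osborn formula, solving for $\tau_\eps-\tau_0$, and clearing $\tau_0$ from numerator and denominator yields exactly the stated expansion.

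The main obstacle is the uniformity: one must justify that $P_\eps(\tau)$ is rank one and analytic in $\tau$ on a fixed neighborhood of $\tau_0$ uniformly in small $\eps$, so that $\nu_\eps(\tau)$ is a well-defined analytic scalar function on that neighborhood, and then simultaneously control the Taylor remainder of $\nu_\eps$ in $\tau$ and the Osborn remainder in $\eps$ so that the final error appears as the bilinear product $\|({\bf T}_\eps-{\bf T}_0)\phi\|\,\|({\bf T}^*_\eps-{\bf T}^*_0)\phi\|$ rather than the cruder $\|{\bf T}_\eps-{\bf T}_0\|^2$. This refinement relies on Osborn's adjoint-symmetric remainder identity, which in turn uses that $P_\eps-P_0$ itself is expressible as a resolvent integral of $({\bf T}_\eps-{\bf T}_0)$ acting on the one-dimensional range of $P_0$.
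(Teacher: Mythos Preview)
The paper does not prove this theorem at all: it is stated without proof, introduced explicitly as ``Theorem 4.1 of \cite{sharinonlinear}'', i.e.\ a result quoted from Moskow's paper on nonlinear eigenvalue approximation for compact operators, and is used here as a black box to derive the asymptotic formula for transmission eigenvalues. So there is nothing in the present paper to compare your argument against.

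That said, your sketch is the natural and correct route to such a result, and is essentially the strategy of the cited reference: reduce the nonlinear problem $\tau{\bf T}_\eps(\tau)\psi=\psi$ to the scalar characteristic equation $\nu_\eps(\tau)=1$ via the rank-one Riesz projection around the simple eigenvalue $1$ of $\tau_0{\bf T}_0(\tau_0)$; apply Osborn's linear perturbation formula at fixed $\tau=\tau_0$ to expand $\nu_\eps(\tau_0)-1$; compute $\nu_0'(\tau_0)=\tau_0^{-1}+\tau_0\bigl(\tfrac{d}{d\tau}{\bf T}_0(\tau_0)\phi,\phi\bigr)$ so that the nondegeneracy hypothesis is exactly $\nu_0'(\tau_0)\neq 0$; and invert via the implicit function theorem. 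Your identification of the main obstacle is also accurate: the delicate point is to show that the remainder after one Newton step is controlled by the bilinear quantity $\|({\bf T}_\eps-{\bf T}_0)\phi\|\,\|({\bf T}^*_\eps-{\bf T}^*_0)\phi\|$ rather than the cruder $\|{\bf T}_\eps-{\bf T}_0\|^2$, and this is precisely what Osborn's adjoint-symmetric remainder identity (expressing $\nu_\eps-\nu_0$ through the action of ${\bf T}_\eps-{\bf T}_0$ on the one-dimensional ranges of $P_0$ and $P_0^*$) provides.
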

Theorem \ref{shariresult} only holds for simple eigenvalues. Notice that we have established the order of convergence of the operator defined by the transmission eigenvalue problem. In particular, the results in the previous section (see equation \eqref{error2}) gives that
$$ \left\| {\bf T}_{\eps}(\tau)(w_\tau , v_\tau)-{\bf T}_{0}(\tau) (w_\tau , v_\tau)\right\|  = \mathcal{O}(\eps^{d/2}).$$
We now consider the point wise convergence for the adjoint operator. 
\begin{lemma}
Let $(w_\tau , v_\tau) \in X(D)$ be the smooth eigenfunction corresponding to the eigenvalue $\tau$ of the operator ${\bf T}_{0}(\tau)$, then we have that 
$$ \left\| {\bf T}^*_{\eps}(\tau) (w_\tau , v_\tau) -{\bf T}^*_{0}(\tau) (w_\tau , v_\tau) \right\|  = \mathcal{O}(\eps^{d/2+1}).$$
\end{lemma}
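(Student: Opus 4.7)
The strategy is to dualize and exploit the self-adjointness of ${\bf A}_\eps$, which follows from the Hermitian symmetry of $\mathcal{A}_\eps$ on $X(D)$ (noting that $A$ is a real symmetric matrix). Thus $({\bf A}_\eps^{-1})^* = {\bf A}_\eps^{-1}$ and
\[
{\bf T}_\eps^*(\tau) - {\bf T}_0^*(\tau) = \left({\bf B}_0^* + \tfrac{1}{\overline{\tau}}\,{\bf C}^*\right)\!\left({\bf A}_\eps^{-1} - {\bf A}_0^{-1}\right).
\]
Writing the norm as a supremum of dual pairings and moving the left factor across the $X(D)$ inner product yields
\[
\|({\bf T}_\eps^* - {\bf T}_0^*)(w_\tau,v_\tau)\|_{X(D)} = \sup_{\|\Phi\|=1}\big|\big((w_\tau,v_\tau),\,({\bf A}_\eps^{-1} - {\bf A}_0^{-1})\psi\big)_{X(D)}\big|,
\]
with $\psi := ({\bf B}_0 + \tfrac{1}{\tau}\,{\bf C})\Phi$. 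The regularity estimates recorded just before Theorem~\ref{aconverge} give $\psi \in H^3(\Omega)\times H^3(\Omega)$ for any $\Omega\Subset D$, with norm controlled by $\|\Phi\|_{X(D)}$.

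Setting $(p_\eps,q_\eps):={\bf A}_\eps^{-1}\psi$ and $(p,q):={\bf A}_0^{-1}\psi$, a further application of elliptic regularity makes $p,q$ smooth in a neighbourhood $\Omega'\Subset \Omega$ of the defects, so Corollary~\ref{correctorconv} (in its multi-inhomogeneity form) supplies the expansion
\[
p_\eps - p = \eps\sum_{m=1}^M p(z_m)\,w^{(1)}_m(\cdot/\eps) + R^{\,p}_\eps,\qquad \|R^{\,p}_\eps\|_{H^1(D)} + \|q_\eps - q\|_{H^1(D)} = \mathcal{O}(\eps^{d/2+1})\|\Phi\|_{X(D)}.
\]
A direct Cauchy--Schwarz against $(w_\tau,v_\tau)\in X(D)$ immediately controls the contributions of $R^{\,p}_\eps$ and $q_\eps-q$ to the pairing by $\mathcal{O}(\eps^{d/2+1})\|\Phi\|$.

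The main obstacle is estimating the leading corrector contribution $\eps\sum_m p(z_m)\,\big(w_\tau,\, w^{(1)}_m(\cdot/\eps)\big)_{H^1(D)}$, because a naive Cauchy--Schwarz in $H^1$ is insufficient: one has $\|w^{(1)}_m(\cdot/\eps)\|_{H^1(D)}=\mathcal{O}(\eps^{d/2-1})$, producing only $\mathcal{O}(\eps^{d/2})$. To recover the missing factor of $\eps$ I rescale via $y=(x-z_m)/\eps$ and exploit that $w^{(1)}_m$ and $\nabla w^{(1)}_m$ decay exponentially at infinity (since $A_{\min}>0$ in the exterior equation \eqref{c2prob}), so that both lie in $L^1(\R^d)$. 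The $L^2$ part of the inner product then contributes $\eps^d\|w^{(1)}_m\|_{L^1}\|w_\tau\|_{L^\infty}=\mathcal{O}(\eps^d)$, while the gradient part contributes $\eps^{d-1}\|\nabla w^{(1)}_m\|_{L^1}\|\nabla w_\tau\|_{L^\infty}=\mathcal{O}(\eps^{d-1})$; the bound $|p(z_m)|\leq C\|\Phi\|$ follows from the Sobolev embedding $H^3(\Omega)\subset C^0(\overline{\Omega})$. Altogether the leading term is $\mathcal{O}(\eps^{d})\|\Phi\|$, which is $\mathcal{O}(\eps^{d/2+1})\|\Phi\|$ since $d\geq d/2+1$ for $d\in\{2,3\}$. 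Taking the supremum over $\Phi$ proves the lemma.
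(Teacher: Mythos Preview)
Your argument has a genuine regularity gap at the point where you invoke Corollary~\ref{correctorconv}. With $\psi=({\bf B}_0+\tfrac{1}{\tau}{\bf C})\Phi$ for arbitrary $\Phi\in X(D)$, the regularity estimates before Theorem~\ref{aconverge} give only $\psi\in H^3_{loc}(D)\times H^3_{loc}(D)$, and then $(p,q)={\bf A}_0^{-1}\psi$ is again only $H^3_{loc}$: indeed $p$ solves $-\nabla\cdot A\nabla p+A_{\min}p=-\Delta\psi_1+\psi_1$, and the right-hand side is merely $H^1$ because $\Phi_1$ is merely $H^1$. In $d=2,3$ one has $H^3\hookrightarrow C^{1,\alpha}$ for some $\alpha<1$, but \emph{not} $W^{2,\infty}$. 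The proof of Theorem~\ref{errorthm} (hence Corollary~\ref{correctorconv}) uses that $\nabla_x\cdot(A-A_1)\nabla_x w$ is bounded near the defect and that $\nabla w(\eps y)-\nabla w(0)=\mathcal{O}(\eps)$, both of which require $w\in W^{2,\infty}$ locally. Your assertion that ``a further application of elliptic regularity makes $p,q$ smooth'' therefore overstates what is available, and the remainder bound $\|R^{\,p}_\eps\|_{H^1(D)}=\mathcal{O}(\eps^{d/2+1})\|\Phi\|$ is not justified uniformly in $\Phi$.

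The paper's proof sidesteps this entirely by a much shorter route. It keeps $({\bf A}_\eps^{-1}-{\bf A}_0^{-1})$ acting on the genuinely smooth eigenfunction $(w_\tau,v_\tau)$ and observes that the sesquilinear forms $\mathcal{B}_0$ and $\mathcal{C}$ contain only $L^2$ pairings (no gradients). Hence
\[
\|{\bf B}_0({\bf A}_\eps^{-1}-{\bf A}_0^{-1})(w_\tau,v_\tau)\|_{X(D)}\le C\,\|(w_\eps-w,\,v_\eps-v)\|_{L^2(D)\times L^2(D)},
\]
and rescaling the $L^2$ (rather than $H^1$) norm in \eqref{unscaled} gives $\|(w_\eps-w,v_\eps-v)\|_{L^2(D)}=\mathcal{O}(\eps^{d/2+1})$ directly, since the corrector $\eps w(0)w^{(1)}_\eps(\cdot/\eps)$ has $L^2(D)$ norm of order $\eps\cdot\eps^{d/2}$. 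No dualization, no test-function regularity issue, and no $L^1$--$L^\infty$ argument is needed.

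Your approach can be repaired: use self-adjointness of ${\bf A}_\eps^{-1}-{\bf A}_0^{-1}$ once more to move it back onto $(w_\tau,v_\tau)$ before applying the corrector expansion. Then your $L^1$--$L^\infty$ estimate only requires $\psi_1\in C^1$, which $H^3_{loc}$ does provide. But this is still more work than the paper's two-line observation about $\mathcal{B}_0,\mathcal{C}$ being $L^2$ forms.
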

\begin{proof}
Notice that ${\bf T}^*_{\eps}(\tau)= {\bf B}_{0}{\bf A}_{\eps}^{-1} + \frac{1}{\tau}{\bf C}{\bf A}_{\eps}^{-1} $ where we define $(w,v)={\bf A}_{0}^{-1}(w_\tau , v_\tau)$ and $(w_\eps,v_\eps)={\bf A}_{\eps}^{-1}(w_\tau , v_\tau)$. Now for any $(\varphi_1 , \varphi_2) \in X(D)$
\begin{align*}
\left( {\bf B}_{0}({\bf A}_{\eps}^{-1}- {\bf A}_{0}^{-1})(w_\tau , v_\tau) ; (\varphi_1 , \varphi_2) \right)=\mathcal{B}_0 \big((w_\eps-w, v_\eps-v) ; (\varphi_1 , \varphi_2) \big).
\end{align*}
Since the sesqulinear form $\mathcal{B}_0$ only has $L^2(D)$ terms, we have that 
\begin{align*}
\left| \big( {\bf B}_{0}({\bf A}_{\eps}^{-1}- {\bf A}_{0}^{-1})(w_\tau , v_\tau) ; (\varphi_1 , \varphi_2) \big) \right| \leq C \|(w_\eps-w, v_\eps-v)\|_{L^2(D)}  \|(\varphi_1 , \varphi_2)\|_{X(D)}
\end{align*}
By rescaling the $L^2$ norm in equation \eqref{unscaled} gives that 
$$\| w_\eps(x)-w(x) \|_{L^2(D)} + \| v_\eps(x)-v(x)  \|_{L^2(D)}=\mathcal{O}(\eps^{d/2+1}).$$
therefore $\left\| {\bf B}_{0}({\bf A}_{\eps}^{-1}- {\bf A}_{0}^{-1})(w_\tau , v_\tau) \right\|_{X(D)} = \mathcal{O}(\eps^{d/2+1})$. A similar argument gives that $\left\| {\bf C} ({\bf A}_{\eps}^{-1}- {\bf A}_{0}^{-1})(w_\tau , v_\tau) \right\|_{X(D)} = \mathcal{O}(\eps^{d/2+1})$, proving that claim. 
\end{proof}

\begin{remark}
This result shows why the case where $n_\eps \neq n$ can not be handled by this analytic framework. In particular, the rate of convergence in Theorem \ref{bconverge} for ${\bf B}_{0}-{\bf B}_{\eps}$ is not fast enough to provide an improved convergence rate for ${\bf T}^*_{\eps}(\tau)-{\bf T}^*_{0}(\tau) $ which is necessary to apply Theorem \ref{shariresult}. 
\end{remark}
We have just shown that the remainder term for the non-linear eigenvalue corrector formula is of the order $\eps^{d+1}$. 
To construct an asymptotic formula for the transmission eigenvalues we need to construct an asymptotic formula for 
$$ \Big(  {\bf T}_{0}(\tau) (w_\tau , v_\tau) - {\bf T}_{\eps}(\tau)(w_\tau , v_\tau); (w_\tau , v_\tau) \Big)_{X(D)}$$
where $(w_\tau , v_\tau)$ are the eigenfunctions for $\eps=0$. By equation \eqref{tedefectvarform} we have that $ {\bf B}_{0}(w,v) + \frac{1}{\tau} {\bf C} (w,v)=  \frac{1}{\tau}{\bf A}_{0}(w,v)$. Since the operator $ {\bf A}_{\eps}$ is self-adjoint for all $\eps \geq 0$ the definition of $ {\bf T}_{\eps}(\tau) $ in \eqref{tdef} gives that 
$$\hspace{-2in} \Big(  {\bf T}_{\eps}(\tau) (w_\tau , v_\tau) - {\bf T}_{0}(\tau)(w_\tau , v_\tau); (w_\tau , v_\tau) \Big)_{X(D)} =$$
$$ \hspace{2in} \frac{1}{\tau} \Big(  {\bf A}_{0} (w_\tau , v_\tau) ; \left({\bf A}_{\eps}^{-1} - {\bf A}_{0}^{-1} \right)(w_\tau , v_\tau) \Big)_{X(D)}.$$
This gives that we only need to construct an asymptotic formula for 
$$\mathcal{A}_0 \big( (w_\tau , v_\tau) ; \left({\bf A}_{\eps}^{-1} - {\bf A}_{0}^{-1} \right)(w_\tau , v_\tau) \big).$$
We now derive an asymptotic formula for ${\bf A}_{\eps}^{-1} - {\bf A}_{0}^{-1}$ with respect to the sesquilinear form $\mathcal{A}_0 \big( \cdot \, ; \cdot \big)$ which is given in the following result. 

\begin{theorem}
Let $(w_\tau , v_\tau)$ be the eigenfunctions for $\eps=0$ with transmission eigenvalue $\tau$ and define $(w,v)={\bf A}_{0}^{-1}(w_\tau , v_\tau)$, then we have that 
 \begin{eqnarray*}
&&\hspace{-0.3in}\mathcal{A}_0 \big( (w_\tau , v_\tau) ; \left({\bf A}_{\eps}^{-1} - {\bf A}_{0}^{-1} \right)(w_\tau , v_\tau) \big)= \eps^d \sum\limits_{m=1}^{M} (A-A_m)|B_m| \grad w_\tau(z_m) \cdot \grad \overline{w(z_m)} \\
&&\hspace{1in} +\eps^d \sum\limits_{m=1}^{M} w_\tau(z_m)  \overline{w(z_m)} \int\limits_{\partial B_m}   \left[ (A-A_m) \grad \overline{w^{(1)}_m}(y) \cdot \nu_y \right]\, ds_y+ o(\eps^d).
\end{eqnarray*}

\end{theorem}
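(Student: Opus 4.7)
The plan is to work from the exact identity that comes directly from the variational definitions of ${\bf A}_\eps$ and ${\bf A}_0$. Setting $(w_\eps,v_\eps)={\bf A}_\eps^{-1}(w_\tau,v_\tau)$ and $(w,v)={\bf A}_0^{-1}(w_\tau,v_\tau)$, subtracting the defining relations yields, for every $(\varphi_1,\varphi_2)\in X(D)$,
\[
\mathcal{A}_0\big((w_\eps-w,v_\eps-v);(\varphi_1,\varphi_2)\big)=\int_{D_\eps}(A-A_\eps)\grad w_\eps\cdot\grad\overline{\varphi}_1\,dx,
\]
since $\mathcal{A}_\eps-\mathcal{A}_0$ is supported in $D_\eps$. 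Taking $(\varphi_1,\varphi_2)=(w_\tau,v_\tau)$ and invoking the hermitian symmetry of $\mathcal{A}_0$ (valid because $A$ is real symmetric and $A_{min}$ is real) gives the starting expression
\[
\mathcal{A}_0\big((w_\tau,v_\tau);({\bf A}_\eps^{-1}-{\bf A}_0^{-1})(w_\tau,v_\tau)\big)=\int_{D_\eps}(A-A_\eps)\grad\overline{w}_\eps\cdot\grad w_\tau\,dx.
\]

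I would next substitute the two-scale corrector expansion of Corollary \ref{correctorconv}, namely $w_\eps=w+\eps\widetilde{w}^{(1)}(\cdot/\eps)+r_\eps$ with $\|r_\eps\|_{H^1(D)}=\mathcal{O}(\eps^{d/2+1})$. The contribution of $r_\eps$ is handled by a \emph{localized} Cauchy--Schwarz estimate: since $w_\tau$ is smooth in a fixed neighborhood of each $z_m$ (interior regularity together with the hypothesis that the $z_m$ stay uniformly away from $\partial D$), $\|\grad w_\tau\|_{L^2(D_\eps)}\le \|\grad w_\tau\|_{L^\infty}|D_\eps|^{1/2}=\mathcal{O}(\eps^{d/2})$, whence $\big|\int_{D_\eps}(A-A_\eps)\grad\overline{r}_\eps\cdot\grad w_\tau\,dx\big|=\mathcal{O}(\eps^{d/2+1}\cdot\eps^{d/2})=o(\eps^d)$. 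This localization is essential: the global $H^1(D)$ bound on $r_\eps$ alone would not deliver $o(\eps^d)$ precision in dimension three.

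The surviving integral then splits into a background piece $\sum_m\int_{z_m+\eps B_m}(A-A_m)\grad\overline{w}\cdot\grad w_\tau\,dx$ and a corrector piece $\sum_m\int_{z_m+\eps B_m}(A-A_m)(\grad_y\overline{\widetilde{w}^{(1)}})(\cdot/\eps)\cdot\grad w_\tau\,dx$. For the background piece I rescale $x=z_m+\eps y$ and Taylor-expand $\grad w$ and $\grad w_\tau$ about $z_m$; this immediately produces the first stated term $\eps^d\sum_m|B_m|(A-A_m)\grad w_\tau(z_m)\cdot\grad\overline{w(z_m)}$ modulo $o(\eps^d)$. In the corrector piece, the exponential decay of $w^{(1)}_{m'}(y)$ as $|y|\to\infty$ renders all cross-inclusion ($m'\neq m$) contributions super-algebraically small, so only the diagonal terms $m'=m$ need to be tracked.

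The main obstacle is then to recast each surviving diagonal integral, after rescaling, into the precise boundary-integral form stated on $\partial B_m$. I expect this identification to be driven by integrating by parts across $\partial B_m$ and invoking the transmission jump $\big(A\grad w^{(1)}_m\cdot\nu\big)^+-\big(A_m\grad w^{(1)}_m\cdot\nu\big)^-=(A_m-A)\grad w(z_m)\cdot\nu$ encoded in the variational problem \eqref{c2prob}, while carefully bookkeeping the $w(z_m)$ and $w_\tau(z_m)$ factors generated by Taylor-expanding the smooth background data at $z_m$, and verifying that every residual error accumulated at this stage remains at the $o(\eps^d)$ level.
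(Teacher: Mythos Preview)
Your proposal is correct and follows essentially the same route as the paper: both reduce to the identity $\mathcal{A}_0\big((w_\tau,v_\tau);(w_\eps-w,v_\eps-v)\big)=(\mathcal{A}_0-\mathcal{A}_\eps)\big((w_\tau,v_\tau);(w_\eps,v_\eps)\big)$, insert the corrector expansion of Corollary~\ref{correctorconv}, kill the remainder by the same localized Cauchy--Schwarz estimate, and Taylor-expand the background term to produce $\eps^d|B_m|(A-A_m)\grad w_\tau(z_m)\cdot\grad\overline{w(z_m)}$. The one place where you diverge is the final step on the corrector piece: the paper does \emph{not} use the transmission jump condition but simply applies the divergence theorem \emph{inside} $\eps B_m$ (moving the gradient off $w_\tau$), then rescales and Taylor-expands $w_\tau(\eps y)\approx w_\tau(z_m)$, which delivers the boundary integral $w_\tau(z_m)\int_{\partial B_m}(A-A_m)\grad\overline{w^{(1)}_m}\cdot\nu_y\,ds_y$ directly with the remaining volume term of order $\eps^{d+1}$. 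Since the integral is supported entirely inside the defect, integrating ``across'' $\partial B_m$ and invoking the jump are unnecessary---the paper's one-sided divergence theorem is the cleaner route to the stated form.
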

\begin{proof}
We will prove the result for a single defect centered at the origin then by using translation and summing a finite number of such inhomogeneities, the asymptotic result follows. Letting $(w_\eps ,v_\eps )={\bf A}_{\eps}^{-1}(w_\tau , v_\tau)$, we have that 
 \begin{eqnarray}
\mathcal{A}_0 \big( (w_\tau , v_\tau) ; (w_\eps - w, v_\eps -v  \big) &=& ( \mathcal{A}_0- \mathcal{A}_\eps) \big( (w_\tau , v_\tau) ; (w_\eps , v_\eps ) \big) \nonumber \\ 
 												&=& ( \mathcal{A}_0- \mathcal{A}_\eps) \big( (w_\tau , v_\tau) ; (w_\eps - w -\eps w(0) w^{(1)} , v_\eps -v) \big) \nonumber \\
											&+& ( \mathcal{A}_0- \mathcal{A}_\eps )\big( (w_\tau , v_\tau) ; (w +\eps w(0) w^{(1)} , v )\big). \label{2termz} 
 \end{eqnarray}
 Recall, that by elliptic regularity we have the for any $\Omega$ such that $\eps B \subset \Omega \subset D$ the eigenfunctions are in $C^1(\Omega)$. Using this along with the support of $A-A_\eps$ and Corollary \ref{correctorconv} we can now estimate the first term 
 \begin{align*}
&\left| ( \mathcal{A}_0- \mathcal{A}_\eps ) \big( (w_\tau , v_\tau) ; (w_\eps - w -\eps w(0) w^{(1)} , v_\eps -v) \big) \right|=\\
&  \left|  \int\limits_{\eps B} (A-A_1)\grad w_\tau \cdot \grad \overline{w_\eps - w -\eps w(0) w^{(1)}} \, dx \right| \leq C || w_\tau ||_{H^1(\eps B)} || w_\eps - w -\eps w(0) w^{(1)} ||_{H^1(D)} \\
															&\qquad \qquad  \leq C \eps^{d/2+1} ||\chi_{\eps B}||_{L^2(D)} || w_\tau ||_{C^1(\Omega)} \leq C \eps^{d+1} || w_\tau ||_{C^1(\Omega)}.
\end{align*}
We now consider the second term of \eqref{2termz} which is given by
 \begin{align*}
& ( \mathcal{A}_0- \mathcal{A}_\eps )\big( (w_\tau , v_\tau) ; (w +\eps w(0) w^{(1)} , v )\big) = \int\limits_{\eps B} (A-A_1)\grad w_\tau \cdot \grad \overline{ w +\eps w(0) w^{(1)}} \, dx \\
 										&\qquad = \int\limits_{\eps B } (A-A_1)\grad w_\tau \cdot \grad \overline{w} \, dx + \eps \overline{w(0)} \int\limits_{D_\eps} (A-A_1)\grad w_\tau \cdot \grad \overline{ w^{(1)}} \, dx\\
										& = \eps^d (A-A_1)|B| \grad w_\tau(0) \cdot \grad \overline{w(0)} + \eps \overline{ w(0)} \int\limits_{\eps B} (A-A_1)\grad w_\tau \cdot \grad \overline{w^{(1)}} \, dx  + o(\eps^d)
\end{align*}
where we have used Taylor's expansion about  the origin to estimate the first integral. Now by the divergence theorem we have that the volume integral involving the eigenfunction and the corrector is given by 
 \begin{eqnarray*}
&&\eps \int\limits_{\eps B} (A-A_1)\grad w_\tau \cdot \grad \overline{w^{(1)}} \, dx= \eps \int\limits_{\eps B} w_\tau(x) \grad \cdot (A-A_1)\grad \overline{w^{(1)}}(x/ \eps) \, dx \\
&&\hspace{1.5in} + \eps \int\limits_{\partial(\eps B)}  w_\tau(x) \left[ (A-A_1) \grad \overline{w^{(1)}}(x/\eps) \cdot \nu_x \right]\, ds_x. 
\end{eqnarray*}
Now by rescaling the second integral for $x=\eps y$ and using a Taylor's expansion we have that integration is given by 
 \begin{eqnarray*}
&&\eps \int\limits_{\eps B} (A-A_1)\grad w_\tau \cdot \grad \overline{w^{(1)}} \, dx= \eps^{d+1} \int\limits_{ B} w_\tau(\eps y) \grad \cdot (A-A_1)\grad \overline{w^{(1)}}(y) \, dy \\
&&\hspace{2in} + \eps^d w_{\tau}(0) \int\limits_{\partial B}   \left[ (A-A_1) \grad \overline{w^{(1)}}(y) \cdot \nu_y \right]\, ds_y +o(\eps^d)
\end{eqnarray*}
proving the result. 
\end{proof}

Now we have all we need for an asymptotic formula for simple transmission eigenvalues. Notice that $\frac{d}{d \tau} {\bf T}_{0}(\tau) = -\frac{1}{\tau^2} {\bf A}_{0}^{-1} {\bf C}$, therefore we have that 
$$\tau^2 \left( \frac{d}{d \tau} {\bf T}_{0}(\tau) (w_\tau , v_\tau ) , (w_\tau , v_\tau ) \right) =- \mathcal{C}\big( (w_\tau , v_\tau) ; {\bf A}_{0}^{-1} (w_\tau , v_\tau ) \big).$$
For convenience let the constant
\begin{eqnarray}
q_m=  \int\limits_{\partial B_m}   \left[ (A-A_m) \grad \overline{w^{(1)}_m}(y) \cdot \nu_y \right]\, ds_y \label{polar-constant}
\end{eqnarray}
Therefore we have that simple transmission eigenvalues have the expansion.

\begin{theorem}\label{te-asymformula}
Let $(w_\tau , v_\tau)$ be the eigenfunctions for $\eps=0$ with simple transmission eigenvalue $\tau$ and define $(w,v)={\bf A}_{0}^{-1}(w_\tau , v_\tau)$, then we have that 
$$ \tau_\eps= \tau + \tau \eps^d \sum\limits_{m=1}^{M} \frac{ (A_m-A)|B_m| \grad w_\tau(z_m) \cdot \grad \overline{w(z_m)} + q_m  w_\tau(z_m) \overline{w(z_m)} }{ 1- \mathcal{C}\big( (w_\tau , v_\tau) ; (w , v )\big)  } +o(\eps^d)$$
where $q_m$ is given by \eqref{polar-constant} and
$$\mathcal{C} \big( (w_\tau , v_\tau) ; (w , v )\big)=\int\limits_{D} A_{min} w_\tau \overline{w} \,  - v_\tau \overline{v} \, dx.$$
\end{theorem}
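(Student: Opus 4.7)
The plan is to assemble Theorem \ref{te-asymformula} by feeding the ingredients that the preceding lemmas have already stockpiled into the abstract corrector formula of Theorem \ref{shariresult}. First I would verify the hypotheses of Theorem \ref{shariresult}: ${\bf T}_{\eps}(\tau)$ is compact and analytic in $\tau$ on $\C\setminus\{0\}$ (noted just after \eqref{tdef}), and $ \left\| {\bf T}_{\eps}(\tau)-{\bf T}_{0}(\tau) \right\| \to 0$ on any bounded region bounded away from the origin (Theorem \ref{tconverge}); by hypothesis $\tau$ is simple with normalized eigenfunction $(w_\tau,v_\tau)$. Then Theorem \ref{shariresult} yields
\begin{equation*}
\tau_\eps = \tau + \tau^2 \frac{\bigl(({\bf T}_0(\tau)-{\bf T}_\eps(\tau))(w_\tau,v_\tau),(w_\tau,v_\tau)\bigr)}{1+\tau^2\bigl(\tfrac{d}{d\tau}{\bf T}_0(\tau)(w_\tau,v_\tau),(w_\tau,v_\tau)\bigr)} + R_\eps,
\end{equation*}
with remainder $R_\eps = \mathcal{O}\!\left(\sup_{\tau\in U}\|({\bf T}_\eps-{\bf T}_0)(w_\tau,v_\tau)\|\,\|({\bf T}_\eps^*-{\bf T}_0^*)(w_\tau,v_\tau)\|\right)$.

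Next I would rewrite the numerator and denominator explicitly. For the denominator, differentiating \eqref{tdef} gives $\tfrac{d}{d\tau}{\bf T}_0(\tau) = -\tfrac{1}{\tau^2}{\bf A}_0^{-1}{\bf C}$, so
\begin{equation*}
\tau^2\bigl(\tfrac{d}{d\tau}{\bf T}_0(\tau)(w_\tau,v_\tau),(w_\tau,v_\tau)\bigr) = -\bigl({\bf A}_0^{-1}{\bf C}(w_\tau,v_\tau),(w_\tau,v_\tau)\bigr) = -\mathcal{C}\!\bigl((w_\tau,v_\tau);(w,v)\bigr),
\end{equation*}
using self-adjointness of ${\bf A}_0^{-1}$ and the definition $(w,v)={\bf A}_0^{-1}(w_\tau,v_\tau)$. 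For the numerator, the identity recorded just before the previous theorem,
\begin{equation*}
\bigl(({\bf T}_\eps(\tau)-{\bf T}_0(\tau))(w_\tau,v_\tau),(w_\tau,v_\tau)\bigr) = \tfrac{1}{\tau}\bigl({\bf A}_0(w_\tau,v_\tau);({\bf A}_\eps^{-1}-{\bf A}_0^{-1})(w_\tau,v_\tau)\bigr) = \tfrac{1}{\tau}\mathcal{A}_0\!\bigl((w_\tau,v_\tau);({\bf A}_\eps^{-1}-{\bf A}_0^{-1})(w_\tau,v_\tau)\bigr),
\end{equation*}
is exactly what the previous theorem expands; substituting its asymptotic formula (and flipping the sign because the $\tau^2\bigl(({\bf T}_0-{\bf T}_\eps)\phi,\phi\bigr)$ in Theorem \ref{shariresult} is the negative of what we just computed) produces the principal term
\begin{equation*}
\tau\eps^d\sum_{m=1}^M\Bigl[(A_m-A)|B_m|\nabla w_\tau(z_m)\cdot\overline{\nabla w(z_m)} + q_m w_\tau(z_m)\overline{w(z_m)}\Bigr]
\end{equation*}
divided by $1-\mathcal{C}((w_\tau,v_\tau);(w,v))$, with the $q_m$ boundary-integral term defined in \eqref{polar-constant}.

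Finally I would control $R_\eps$: the previous lemma gives $\|({\bf T}_\eps^*-{\bf T}_0^*)(w_\tau,v_\tau)\| = \mathcal{O}(\eps^{d/2+1})$, while by Theorem \ref{tconverge} (recalling $n_\eps=n$) together with the rescaling argument in the proof of the previous theorem one has $\|({\bf T}_\eps-{\bf T}_0)(w_\tau,v_\tau)\|=\mathcal{O}(\eps^{d/2})$. Multiplying yields $R_\eps = \mathcal{O}(\eps^{d+1}) = o(\eps^d)$, which is absorbed into the stated remainder. The main (mild) obstacles are (i) being careful with the self-adjointness of ${\bf A}_0$ that converts ${\bf A}_\eps^{-1}-{\bf A}_0^{-1}$ into the $\mathcal{A}_0$ sesquilinear form so that the asymptotic formula of the previous theorem applies verbatim, and (ii) checking the $\tau \leftrightarrow 1/\tau$ bookkeeping in Theorem \ref{shariresult} so the factor of $\tau$ (rather than $\tau^2$) appears in front of the sum, which is what the identity $({\bf T}_\eps-{\bf T}_0)=\tfrac{1}{\tau}{\bf A}_0({\bf A}_\eps^{-1}-{\bf A}_0^{-1})$ delivers when combined with the $\tau^2$ in Theorem \ref{shariresult}.
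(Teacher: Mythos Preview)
Your proposal is correct and follows essentially the same approach as the paper: the theorem is not given an independent proof in the paper but is assembled from the preceding ingredients exactly as you describe---verifying the hypotheses of Theorem~\ref{shariresult}, computing the denominator via $\tfrac{d}{d\tau}{\bf T}_0(\tau)=-\tfrac{1}{\tau^2}{\bf A}_0^{-1}{\bf C}$, rewriting the numerator as $\tfrac{1}{\tau}\mathcal{A}_0\bigl((w_\tau,v_\tau);({\bf A}_\eps^{-1}-{\bf A}_0^{-1})(w_\tau,v_\tau)\bigr)$ using self-adjointness of ${\bf A}_\eps$, substituting the asymptotic expansion from the previous theorem, and bounding the remainder by $\mathcal{O}(\eps^{d/2})\cdot\mathcal{O}(\eps^{d/2+1})=\mathcal{O}(\eps^{d+1})$. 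Your attention to the sign flip and the $\tau^2\cdot\tfrac{1}{\tau}=\tau$ bookkeeping is exactly what the paper relies on implicitly.
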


\subsection*{Numerical Validation of the Asymptotic Formula}
The asymptotic formula given in Theorem \ref{te-asymformula} can potentially be used to determine the strength of the small defective region(s).  Notice that the MUSIC algorithm gives the location of the defect(s) and recall that the transmission eigenvalues for the perturbed media $\tau_\eps$ can be measured from the same scattering data needed for MUSIC but for a range of wave numbers $k$ (see \cite{chesnel}, \cite{p1}, \cite{harris}),  whereas the transmission eigenvalues $\tau$ and eigenfunctions $(w_\tau , v_\tau)$ for the unperturbed media can be computed since $A$ and $n$ are assumed to be known. In particular, denoting by ${\mathbf F}_\epsilon:=\left(u_\epsilon(\hat x_i, x_j,k)\right)_{i,j=1..N}$, the far field matrix due to the inhomogeneity $D$ with perturbation $D_\epsilon$, where we indicate its dependence on $k$, to determine the $\tau_\epsilon:=k_\epsilon^2$  we solve the regularized equation 
$$(\alpha+{\mathbf F}_\epsilon^*{\mathbf F}_\epsilon){\mathbf g}(k)={\mathbf F}_\epsilon^*(e^{ikz\cdot\hat x_i})_{i=1..N}, \qquad z\in D$$
for ${\mathbf g}(k)$ for a range of $k$. The transmission eigenvalues are those values of $k$ for which  $\|{\mathbf g}(k)\|_{\ell^2}$ blows up. To compute the transmission eigenvalues  for the unperturbed media  we use a continuous finite element method with the eigenvalue searching technique described in \cite{harris}, \cite{sun} and \cite{sun2}). In order to use the asymptotic formula in Theorem \ref{te-asymformula} one also needs the functions $(w,v)={\bf A}_{0}^{-1}(w_\tau , v_\tau)$ which can be solved for (e.g. using the FEM) since $A$ and $(w_\tau , v_\tau)$ are known. Having identified the location of the defect(s) from the MUSIC algorithm (i.e. the points $z_m$ are known) one can determine the strength of the defect(s) which is given by $(A_m-A)|B_m|$ and $q_m$ from the knowledge of two transmission eigenvalues. Notice, that the strength of the defect(s) only depend on the constitutive coefficients and geometry of the defect(s). 

We first consider a few examples to illustrate the convergence of the transmission eigenvalues as $\eps \rightarrow 0$ in $\R^2$. To do so, we denote the transmission eigenvalues for the unperturbed media by $k^2_j$ and the first transmission eigenvalue for the perturbed media by $k^2_j(\eps)$. To test our asymptotic formula we will check the order of convergence for two transmission eigenvalues. We compute the error and estimated order of convergence by 
$$\text{E}_j(\eps) = \big| k^2_j - k^2_j(\eps) \big| \quad \text{ and } \quad \text{EOC}_j = \log \Big( \text{E}_j(\eps) / \text{E}_j(\eps/2) \Big) / \log(2).$$
In our calculations we see that the order of convergence seems to be approximately second order which is what is predicted by Theorem \ref{te-asymformula}

\noindent {\bf Example 1.} Here we let $D=[-1,1]^2$ where $D_0$ is given by two disks of radius $\eps$ centered at $(0.25 ,0)$ and $(- 0.25 , - 0.25)$. For this case we take $n=n_\eps=1$ for all $\eps$ with 
$$A=\begin{pmatrix}  10 & 1 \\ 1  &  10  \end{pmatrix}$$ 
and $A_\eps =2 I$. Below in Table \ref{TE-conv-1} we show estimated order of convergence for two transmission eigenvalues. 

\begin{table}[!ht]
\centering
\begin{tabular}{c|cc}
$\eps$    & $\mathrm{EOC}_{1}$ & $\mathrm{EOC}_{2}$ \\
 \hline
  1/4  & $-$   & $-$ \\
 1/8  & 2.4423   & 0.8252 \\
 1/16 & 2.3365  &2.0673 \\
 1/32 & 2.0881 & 2.0001\\
1/64 & 2.1705  & 2.1549 \\
 \hline
\end{tabular}
\caption{\label{TE-conv-1} The estimated order of convergence for two eigenvalues where $D_0$ is two disks of radius $\eps$ centered at $(0.25 ,0)$ and $(- 0.25 , - 0.25)$.}
\end{table}

\noindent {\bf Example 2.} Here we let ${\displaystyle D= \left\{ (x_1 , x_2) \in \R^2 \,  : \, \frac{x_1^2}{4}+x_2^2 <1 \right\} }$ where $D_0$ is the disk centered at the origin of radius $\eps$. For this case we take $n=n_\eps=1$ for all $\eps$ with $A=10I$  and $A_\eps =2 I$. Below in Table \ref{TE-conv-2} we show estimated order of convergence for two transmission eigenvalues. 

\begin{table}[!ht]
\centering
\begin{tabular}{c|cc}
$\eps$    & $\mathrm{EOC}_{1}$ & $\mathrm{EOC}_{2}$ \\
 \hline
 1/4  & $-$   & $-$ \\
 1/8  & 1.9304   & 2.2957 \\
 1/16 & 2.1519  & 2.2278 \\
 1/32 & 2.1161 & 2.0304\\
1/64 & 3.1701  & 2.5851 \\
 \hline
\end{tabular}
\caption{\label{TE-conv-2} The estimated order of convergence for two eigenvalues where $D_0$ is the  disk of radius $\eps$ centered at the origin.}
\end{table}

Next we provide an example on how to use the asymptotic formula in Theorem \ref{te-asymformula} to obtain information about the strength of the perturbation. For the case of a homogeneous isotropic unit disc $D$ with  $A= \alpha I$ with $\alpha$ a positive constant and $n=1$ we have that the radially symmetric eigenfunctions corresponding to the eigenvalues $k^2$ are given by 
$$w_k(r) = \text{J}_0(k) \text{J}_0 \left( {k}/{\sqrt{\alpha} } \, r \right) \quad \text{and} \quad v_k(r) = \text{J}_0  \left( {k}/{\sqrt{\alpha} } \right) \text{J}_0 ({k}r).$$
Here $\text{J}_0$ is the first kind Bessel function of order zero.Using  the variational formulation,  the solution to ${\bf A}_{0}(w,v)=(w_k , v_k)$ can be show to be  
$$w(r) = c_1 \text{I}_0(r)  + \alpha^{-1} w_k(r)  \quad \text{and} \quad v(r) = c_2 \text{I}_0 (r)  -v_k (r)$$
where the constants $c_1$ and $c_2$ satisfy 
$$\left[
\begin{array}{cc} \text{I}_0(1) & \, \,  - \text{I}_0(1) \\  \alpha \text{I}'_0 (1) & \, \,   -\text{I}'_0 (1) \end{array}
\right]  
\left[
\begin{array}{c} c_1 \\ c_2 \end{array}
\right]  
=
\left[
\begin{array}{c}-(\alpha^{-1}+1)w_k(1)   \\ (1-\alpha) w'_k(1)  \end{array}
\right] $$
where here $\text{I}_0$ is the third kind Bessel function of order zero. This implies that the corrector term in Theorem \ref{te-asymformula} is known up to the weighted contrast $(A_m-A)|B_m|$ and `polarization' constant $q_m$. Assuming that two transmission eigenvalues for the unperturbed media are known and the corresponding two eigenvalues for the perturbed media is computed via the far-field data, by ignoring the $o(\eps^d)$ term in the asymptotic formula given in Theorem \ref{te-asymformula} one obtains a 2$\times$2 linear system of equations to determine the weighted contrast and polarization constant. For proof of concept, we consider the case where $D$ is the unit disk with $D_0$ being the disk of radius $\eps$ centered at $(0.25 ,0)$ where coefficients are taken be $A=10I$  and $A_\eps =2 I$. We assume that for this configuration the center of the disc is reconstructed using MUSIC as shown in Figure \ref{musicfig33}.
\begin{figure}[!ht]
\centering
\includegraphics[scale=0.3]{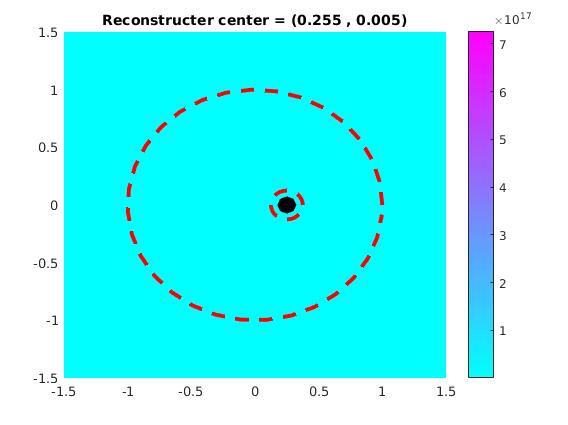}\includegraphics[scale=0.3]{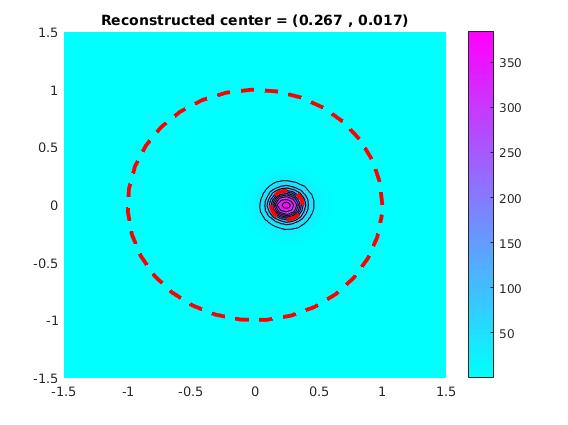}\\
\caption{Reconstruction  the defective region $D_\epsilon$ centered at $(0.25 ,0)$.  The figure on the left shows the reconstruction without noise, where the estimated location is $(0.255, 0.005)$. The figure on the right shows  the reconstruction with 10$\%$ noise, where the estimated location is $(0.267, 0.017)$. Here $N=20$.}
\label{musicfig33}
\end{figure}
Using the first two radially symmetric eigenvalues and functions we wish to determine the contrast for the particular case of $\eps = 1/2$. In this example the contrast is given by $-8$ and solving the 2$\times$2 linear system derived from the asymptotic formula recovers a contrast of $-7.3465$ if the exact location is used in the formula. Using the reconstructed center $(0.255, 0.005)$ for the case without noise in Figure \ref{musicfig33} we obtain   that the contrast is  $-7.1222$, and using the reconstructed center $(0.267, 0.017)$  for the case with $10\%$ noise in Figure \ref{musicfig33} we obtain   that the contrast is  $-6.6147$.  This preliminary example shows that one can determine information about the location and material properties of the small defects from a knowledge of the far-field data. Of course further investigation is needed to numerically validate our imaging method.
\section*{Acknowledgments}

{The research of F. Cakoni is supported in part by AFOSR grant  FA9550-17-1-0147, NSF Grant DMS-1602802 and Simons Foundation Award 392261.  The research of S. Moskow is supported in part by NSF Grant DMS-1411721.}


\end{document}